\newtheorem{theorem} {Theorem} [section]
\newtheorem{lemma} [theorem] {Lemma}
\newtheorem{corollary} [theorem] {Corollary} 
\theoremstyle{definition}
\newtheorem{definition}[theorem]{Definition}
\newtheorem{example}[theorem]{Example}
\newtheorem{remark}[theorem]{Remark}
\newtheorem*{notation}{Notation}
\renewcommand{\ge}{\geqslant}
\renewcommand{\le}{\leqslant}
\renewcommand{\geq}{\geqslant}
\renewcommand{\leq}{\leqslant}
\newcommand{\lhdeq}{\trianglelefteqslant}    
\newcommand{\defi}{=}
\let\originalleft\left
\let\originalright\right
\renewcommand{\left}{\mathopen{}\mathclose\bgroup\originalleft}
\renewcommand{\right}{\aftergroup\egroup\originalright}
\newcommand{\paren}[1]{\left( #1 \right)}
\newcommand{\derk}[2]{{\rm Fix}_{#1}\paren{#2}}
\newcommand{\pder}[1]{\delta\paren{#1}}
\newcommand{\pderk}[2]{\delta_{#1}\paren{#2}}
\newcommand{\alt}{\mathcal{A}}
\newcommand{\Der}{\mathrm{Der}}
\newcommand{\Fix}{\mathrm{Fix}}
\newcommand{\Sym}{\mathrm{Sym}}
\newcommand{\sym}{\mathcal{S}}
\newcommand{\stir}[2]{\genfrac{[}{]}{0pt}{}{#1}{#2}}
\newcommand{\intrans}{\times_I}
\newcommand{\product}{\times_P}
\newcommand{\imprim}{\wr_I}
\newcommand{\power}{\wr_P}
\newcommand{\Px}{{\rm({\bf Px})}}
\newcommand{\Ix}{{\rm({\bf Ix})}}
\newcommand{\Pw}{{\rm({\bf Pw})}}
\newcommand{\Iw}{{\rm({\bf Iw})}}
\author[1]{Vishnuram Arumugam\thanks{Supported by the Australian Government 
Research Training Program.}}
\author[2]{Heiko Dietrich\thanks{%
    Research visit of H. Dietrich to UWA was supported by the
    2022 Cheryl Praeger Visiting Fellowship.}}
\author[1]{S.P. Glasby\thanks{Supported by the Australian Research Council
Discovery Grant DP190100450. This problem was posed at the CMSC 2022 Annual Research Retreat. All authors thank the CMSC for its hospitality.}}
\affil[1]{\small Center for the Mathematics of Symmetry and Computation,
  University of Western Australia, Perth 6009, Australia\\
\href{mailto:Vishnuram.Arumugam@uwa.edu.au}{Vishnuram.Arumugam@uwa.edu.au} and \href{mailto:Stephen.Glasby@uwa.edu.au}{Stephen.Glasby@uwa.edu.au}}
\affil[2]{\small School of Mathematics, Monash University, Clayton 3800, Australia\\\href{mailto:heiko.dietrich@monash.edu}{Heiko.Dietrich@monash.edu}}
\title{\bf Derangements in wreath products\\ of permutation groups}
\begin{document}

\maketitle

\vspace*{-0.5cm}

\begin{abstract}
  \noindent Given a finite group $G$ acting on a set $X$
  let $\delta_k(G,X)$ denote
  the proportion of elements in $G$ that have exactly $k$ fixed points in $X$.
  Let $\sym_n$ denote the symmetric group acting
  on $[n]=\{1,2,\dots,n\}$. For $A\le\sym_m$ and $B\le\sym_n$, the permutational
  wreath product $A\wr B$ has two natural actions and we give formulas for both,
  $\delta_k(A\wr B,[m]{\times}[n])$ and  $\delta_k(A\wr B,[m]^{[n]})$. We prove
  that for $k=0$ the values of these proportions are dense in the intervals
  $[\delta_0(B,[n]),1]$ and $[\delta_0(A,[m]),1]$.  Among further results, we provide
  estimates for $\delta_0(G,[m]^{[n]})$ for subgroups $G\leq \sym_m\wr\sym_n$ containing $\alt_m^{[n]}$.
  \vskip1mm\noindent
  {\bf Keywords:} permutation groups, derangements, fixed-point-free permutations, wreath products
  \vskip1mm\noindent
  {\bf 2020 Mathematics Subject Classification:} 20B07, 20B35, 05A05
\end{abstract}


\section{Introduction}\label{sec_intro}
\noindent Let $\sym_n$ and $\alt_n$ denote the symmetric and alternating group acting on
$[n]=\{1,2,\dots,n\}$. A permutation of $\sym_n$ that fixes no element in
$[n]$ is a \emph{derangement}. The proportion of derangements
in a subset $C\subseteq\sym_n$ is denoted $\delta(C)$.
Given $k\in\{0,1,\dots,n\}$, let $\derk{k}{C}$ denote the set of
all permutations in $C$ with precisely $k$ fixed points. We write
$\delta_k(C)=|\derk{k}{C}|/|C|$, and note that $\delta(C)=\delta_0(C)$.
{The set of derangements in $C$ is denoted by $\Der(C)$ or $\derk{0}{C}$.}
Given subgroups $A\le\sym_m$ and
$B\le \sym_n$, the wreath product $A\wr B=A^{[n]}\rtimes B$ gives rise to two
natural permutation subgroups: the \emph{imprimitive} subgroup
$A\imprim B\le\sym_{mn}$ and the subgroup $A\power B\le\sym_{m^n}$ 
with \emph{power} (or \emph{product}) action, see \Iw\ and \Pw\ in Section \ref{sec_actions} for details. There is a large body of literature on derangements; we highlight the most relevant results in Section \ref{sec_background}. 

For $B\leq \sym_n$ and $\ell\in [n]$ denote by $\stir{B}{\ell}$ the number of
permutations in $B$ with precisely~$\ell$ cycles in their disjoint
cycle decomposition; we note that $\stir{\sym_n}{\ell}$ is the
Stirling number $\stir{n}{\ell}$ of the first kind,
see~\cite[Section~6.1]{GKP89} for properties of $\stir{n}{\ell}$. Our first result concerns  $\delta_k(A\wr_I B)$ and $\delta_k(A\power B)$.

\begin{theorem}\label{thm_formulas}
  If $m,n\ge2$, $A\le\sym_m$, and $B\leq \sym_n$, then
  \begin{align*}
    \delta_k(A\wr_I B) &= \sum_{\ell=0}^n \delta_\ell(B) \sum_{j_1+\dots+j_\ell=k} \prod_{r=1}^{\ell} \delta_{j_r}(A)
    \quad\text{and so}\quad
    \delta(A\wr_I B) = \sum_{\ell=0}^n \delta_\ell(B) \delta(A)^\ell;\\[1ex]
    \delta_k(A\power B) &= \frac{1}{|B|}\sum_{\ell=1}^n\stir{B}{\ell} \sum_{j_1\cdots j_\ell=k} \prod_{r=1}^\ell \delta_{j_r}(A)
  \text{\ and \ }
  \delta(A\power B)=1-\frac{1}{|B|}\sum_{\ell=1}^n \stir{B}{\ell}(1-\delta(A))^\ell.
    \end{align*}
\end{theorem}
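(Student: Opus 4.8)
The plan is to compute, for an arbitrary element $g=(a_1,\dots,a_n;b)\in A\wr B$ (with $a_i\in A$ and $b\in B$), the number of points it fixes in each of the two actions, and then sum over $g\in A\wr B$ and divide by $|A\wr B|=|A|^n|B|$. Throughout, let $\Fix(\sigma)$ denote the number of fixed points of a permutation $\sigma$; recall that the set of elements of $A$ with exactly $j$ fixed points has size $\delta_j(A)\,|A|$, and $\sum_{j\ge0}\delta_j(A)=1$.

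\emph{Imprimitive action.} A point $(i,j)\in[m]\times[n]$ is fixed by $g$ precisely when $b$ fixes $j$ and $a_j$ fixes $i$, so $g$ fixes $\sum_{j\,:\,jb=j}\Fix(a_j)$ points of $[m]\times[n]$ (the exact convention used for the semidirect product merely relabels the indices and does not change this count, since it only involves those $j$ fixed by $b$). To count the $g$ with exactly $k$ fixed points I would condition on $b$: if $\Fix(b)=\ell$, the $n-\ell$ coordinates $a_j$ with $j$ not fixed by $b$ are arbitrary, contributing a factor $|A|^{n-\ell}$, while the $\ell$ coordinates indexed by the fixed points of $b$ must satisfy $\sum\Fix(a_j)=k$; the number of such $\ell$-tuples is $|A|^{\ell}\sum_{j_1+\dots+j_\ell=k}\prod_{r=1}^{\ell}\delta_{j_r}(A)$. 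Grouping the $b\in B$ by the value $\ell=\Fix(b)$ (there are $\delta_\ell(B)\,|B|$ of them) and dividing by $|A|^n|B|$ gives the first formula; taking $k=0$ forces every $j_r=0$, so the inner sum collapses to $\delta(A)^{\ell}$.

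\emph{Power action.} Writing a point of $[m]^{[n]}$ as a tuple $(x_1,\dots,x_n)$, the fixed-point equations for $g$ couple the coordinates along each cycle of $b$: on a cycle $(j_1,\dots,j_c)$ they read $x_{j_{t+1}}=x_{j_t}^{\,a_{j_t}}$ (indices cyclic), so the single value $x_{j_1}$ determines all coordinates on that cycle and must be a fixed point of the cycle product $\pi_C=a_{j_1}a_{j_2}\cdots a_{j_c}\in A$. Hence $g$ fixes $\prod_{C}\Fix(\pi_C)$ points, the product ranging over the cycles $C$ of $b$. The key observation is that, for a fixed $b$ with exactly $\ell$ cycles, the map $A^n\to A^\ell$, $(a_1,\dots,a_n)\mapsto(\pi_C)_C$, is surjective with every fibre of size $|A|^{n-\ell}$: for one cycle, fixing all but one of its coordinates makes $\pi_C$ run bijectively over $A$ as the last coordinate does, and distinct cycles involve disjoint coordinate sets. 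Therefore the number of $(a_j)\in A^n$ with $\prod_C\Fix(\pi_C)=k$ equals $|A|^{n-\ell}$ times the number of $\ell$-tuples $(\pi_C)\in A^\ell$ with $\prod\Fix(\pi_C)=k$, namely $|A|^n\sum_{j_1\cdots j_\ell=k}\prod_{r=1}^{\ell}\delta_{j_r}(A)$. Summing over $b\in B$ grouped by the number of cycles $\ell$ (there are $\stir{B}{\ell}$ such $b$) and dividing by $|A|^n|B|$ yields the formula for $\delta_k(A\power B)$. For $k=0$ I would use $\sum_{j\ge0}\delta_j(A)=1$ to get $\sum_{j_1\cdots j_\ell=0}\prod_r\delta_{j_r}(A)=1-(1-\delta(A))^{\ell}$, together with $\sum_{\ell=1}^{n}\stir{B}{\ell}=|B|$, to reach the stated closed form.

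These computations are essentially routine. The two places needing care are the bookkeeping of the wreath-product conventions (on which side the base group acts, and the orientation of the cycles of $b$) and the uniform-fibre claim for the cycle-product map $(a_j)\mapsto(\pi_C)_C$ in the power action, which is precisely what lets the sum over the base group factor cleanly through $A^\ell$. One should also fix the convention that the indices $j_1,\dots,j_\ell$ range over nonnegative integers, noting that for $k\ge1$ the product condition $j_1\cdots j_\ell=k$ automatically forces every $j_r\ge1$.
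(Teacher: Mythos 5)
Your proposal is correct and follows essentially the same route as the paper: for both actions it conditions on $b$, identifies the fixed points of $(\alpha,b)$ via the fixed points of the coordinates $\alpha(y_i)$ (imprimitive case) or of the cycle products $b_i(\alpha)$ (power case), and uses the uniform-fibre property of the cycle-product map $A^n\to A^\ell$ exactly as the paper does by solving for one coordinate per cycle. The only minor difference is that you derive $\sum_{j_1\cdots j_\ell=0}\prod_r\delta_{j_r}(A)=1-(1-\delta(A))^\ell$ directly by complementary counting from $\sum_{j\ge0}\delta_j(A)=1$, which is slightly cleaner than the paper's induction on $\ell$.
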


The formulas for $\delta_k(A\imprim  B)$ and $\delta_k(A\power  B)$
are proved in
Theorems~\ref{thm_imprim_k} and \ref{thm_power}. Setting $k=1$ in Theorem~\ref{thm_formulas} gives the simple formulas for
$\delta_1(A\imprim B)$ and $\delta_1(A\power B)$ stated in
Corollaries~\ref{C:Iw} and~\ref{C:Pw}. {Formulas for $k=0$ were
known~\cite[Theorems 4.3, 5.4(1)]{Boston} and follow easily from our results.}

We now summarise our density results; as usual, for $a\le b$ we write $[a,b]=\{x\in\mathbb{R}\mid a\le x\le b\}$.

\begin{theorem}\label{thm_imprim1}
  For fixed $B\leq \sym_n$, the set
  $\{\delta(A\imprim B) \mid A\le\sym_m \text{ primitive},\; m\in\mathbb{N}\}$
  is dense in $[\delta(B),1]$.
  For fixed $A\leq \sym_m$, the set
  $\{\delta(A\imprim B)\mid B\le\sym_n\text{ imprimitive},\; n\in\mathbb{N}\}$
  is dense in $[\delta(A),1]$.  
\end{theorem}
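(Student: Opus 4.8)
The plan is to use the formula $\delta(A\imprim B)=\sum_{\ell=0}^n\delta_\ell(B)\,\delta(A)^\ell$ from Theorem~\ref{thm_formulas}, viewing the right-hand side as a polynomial in the single real variable $t=\delta(A)\in[0,1]$. Write $f_B(t)=\sum_{\ell=0}^n\delta_\ell(B)\,t^\ell$. Since the $\delta_\ell(B)$ are nonnegative and sum to $1$, $f_B$ is a convex combination of the monomials $t^\ell$, hence $f_B$ is continuous and monotonically nondecreasing on $[0,1]$ with $f_B(0)=\delta_0(B)=\delta(B)$ and $f_B(1)=1$. Therefore $f_B$ maps $[0,1]$ onto $[\delta(B),1]$, and the image of $f_B$ is exactly the target interval. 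So for the first statement it suffices to show that the set $\{\delta(A)\mid A\le\sym_m\text{ primitive},\ m\in\mathbb{N}\}$ is dense in $[0,1]$; continuity of $f_B$ then transports density to density of $\{f_B(\delta(A))\}=\{\delta(A\imprim B)\}$ in $[\delta(B),1]$.

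The crux is therefore producing primitive groups whose derangement proportions are dense in $[0,1]$. For this I would use the cyclic groups $C_p$ of prime order $p$ acting regularly on $p$ points: this action is primitive (a group of prime order has no proper nontrivial blocks), and $\delta(C_p)=(p-1)/p$ since every non-identity element is a derangement. Thus $\{\delta(C_p)\mid p\text{ prime}\}=\{(p-1)/p\}$ accumulates at $1$ but not at smaller values — so regular cyclic groups alone only give density near $1$. To fill in the rest of $[0,1]$ I would instead take direct products... no: the right move is to use a product/composition construction within the primitive world. A cleaner route: for any target $c\in[0,1)$ and $\varepsilon>0$, pick a large prime $p$ and a subset structure so that $\delta\approx c$; concretely, take $A=\sym_m$ acting naturally, where $\delta(\sym_m)\to e^{-1}$, which is primitive but gives only one accumulation point. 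The honest fix is to observe that we have freedom to iterate: the primitive groups $\mathrm{AGL}_1(p)=C_p\rtimes C_{p-1}$ acting on $p$ points, or more flexibly $\mathrm{PGL}_2(q)$ and other families, give a richer set of values; but the simplest uniform argument is to use $A\power B$-type or subgroup constructions. Given the paper later treats this, I expect the intended argument is: the values $\delta(C_p)=1-1/p$ are dense near $1$, and to get density on all of $[0,1]$ one applies the construction recursively — if $\delta(A)$ can be made close to any $c$, then choosing $B$ suitably and using $f_B$ extends the reachable set downward from $1$; a short induction on the "depth" of iterated wreath/primitive constructions yields a dense set of values for primitive $A$.

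For the second statement the structure is symmetric but with roles swapped: by Theorem~\ref{thm_formulas}, $\delta(A\imprim B)=g_A(\delta(B))$ where... actually the formula is not symmetric in $A$ and $B$, so instead I would fix $A$ and regard $\delta(A\imprim B)=\sum_{\ell=0}^n\delta_\ell(B)\,\delta(A)^\ell$ as a function of the \emph{tuple} $(\delta_0(B),\dots,\delta_n(B))$. Here the key point is that imprimitive $B$ can be built as iterated wreath products $\sym_{n_1}\imprim\sym_{n_2}\imprim\cdots$, which are imprimitive for any factorisation with at least two nontrivial factors, and for these one computes $\delta_\ell(B)$ recursively; by choosing the factors one can drive $\delta(B)=\delta_0(B)$ arbitrarily close to $0$ while keeping the higher $\delta_\ell(B)$ under control, so that $\sum_{\ell\ge1}\delta_\ell(B)\delta(A)^\ell$ sweeps out a dense subset of $[0,1-\delta(A)\cdot 1]$... more precisely of $[0,1-\delta(B)]$, and combined with the range of $\delta(B)\in(0,1)$ this gives density in $[\delta(A),1]$.

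The main obstacle I anticipate is the first statement's requirement of \emph{primitivity}: I need an explicit infinite family of primitive permutation groups whose derangement proportions form a dense subset of $[0,1]$ (not merely a set accumulating at one or two points like $\{1-1/p\}$ or $\{e^{-1}\}$). Resolving this likely requires combining a base family (say regular $C_p$, giving values near $1$) with a mechanism — iterated constructions or a parametrised family such as affine or projective groups — to reach every value down to $0$; verifying primitivity of the resulting groups and controlling the resulting $\delta$-values is the technical heart of the proof. The second statement is easier, since imprimitivity is a mild condition that is automatically satisfied by the natural iterated wreath constructions one wants to use.
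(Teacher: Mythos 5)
Your reduction for the first claim is exactly the paper's: write $\delta(A\imprim B)=\mathcal{P}_B(\delta(A))$ with $\mathcal{P}_B$ continuous, increasing, $\mathcal{P}_B(0)=\delta(B)$, $\mathcal{P}_B(1)=1$, and transport density. But the crux you correctly identify --- that $\{\delta(A)\mid A\le\sym_m\text{ primitive},\ m\in\mathbb{N}\}$ is dense in $[0,1]$ --- is left unresolved in your write-up, and your candidate fixes do not work as stated: regular $C_p$ and $\sym_m$ give only the accumulation points $1$ and $e^{-1}$, and ``iterating'' wreath products in the imprimitive action produces \emph{imprimitive} groups, so it cannot supply the primitive family you need. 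The paper does not reprove this density statement; it invokes it as a known result of Boston et al.\ (Theorem~\ref{boston}(c), i.e.\ \cite[Theorem 5.11]{Boston}), where the primitive families come from power-action wreath products. Without either that citation or a worked-out primitive construction, your proof of the first claim has a genuine gap.

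The second claim is also incomplete on two counts. First, you need the containment $\delta(A\imprim B)\in[\delta(A),1]$ for all imprimitive $B$, which amounts to $\mathcal{P}_B(x)\ge x$ on $[0,1]$ for transitive $B$; this is not automatic from nonnegativity of the coefficients (it fails for intransitive $B$) and the paper proves it via convexity of $\mathcal{P}_B$ together with $\mathcal{P}_B'(1)=1$ (Lemma~\ref{L:convex}). Second, your density mechanism (``drive $\delta_0(B)$ to $0$ while keeping the higher $\delta_\ell(B)$ under control'') is not carried out. The paper's actual device is quantitative: take $C=\mathrm{AGL}_1(q)$, for which $0\le\mathcal{P}_C(x)-x\le 1/q$ on $[0,1]$, and form the iterated imprimitive wreath powers $B_{r,q}=C\imprim\cdots\imprim C$; the values $c_i=\mathcal{P}_C(c_{i-1})$ starting from $c_0=\delta(A)$ climb in steps of size at most $1/q$ and, by an explicit lower bound $\mathcal{P}_{B_r}(x)>(1-\tfrac1q)(1-\beta^r)$, eventually exceed $1-\varepsilon$. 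You would need to supply both the convexity lemma and some such small-step ladder to complete your argument.
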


\begin{theorem}\label{thm_power_dense1}
    For fixed $B\leq \sym_n$, the set
  $\{\delta(A\power B) \mid A\le\sym_m \text{ primitive},\; m\in\mathbb{N}\}$
  is dense in~$[0,1]$.
  For fixed $A\leq \sym_m$, the set 
  $\{\delta(A\wr_P B) \mid B\le\sym_n \text{ regular},\;n\in\mathbb{N}\}$
  is dense in $[\delta(A),1]$.
\end{theorem}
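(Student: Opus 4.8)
The plan is to read off $\delta(A\power B)$ from Theorem~\ref{thm_formulas} and reduce both assertions to one number-theoretic fact: the set $\{\varphi(n)/n:n\in\mathbb{N}\}$ of values of Euler's totient ratio is dense in $[0,1]$, with controllably small jumps. For $B\le\sym_n$ put
\[
  f_B(t)\;=\;1-\frac{1}{|B|}\sum_{\ell=1}^{n}\stir{B}{\ell}\,(1-t)^{\ell},
\]
so Theorem~\ref{thm_formulas} says $\delta(A\power B)=f_B(\delta(A))$. Because $\sum_{\ell=1}^{n}\stir{B}{\ell}=|B|$ we have $f_B(0)=0$ and $f_B(1)=1$; since $f_B$ is a polynomial with $f_B'>0$ on $[0,1)$, the map $f_B\colon[0,1]\to[0,1]$ is an increasing homeomorphism, hence carries dense subsets of $[0,1]$ onto dense subsets of $[0,1]$. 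We record one computation: for the regular group $C_m$ of degree $m$ an element of order $d$ decomposes into $m/d$ cycles, so $\stir{C_m}{\ell}=\varphi(m/\ell)$ for $\ell\mid m$ and $0$ otherwise; separating the term $\ell=1$ and using $\sum_{\ell\mid m}\varphi(m/\ell)=m$ together with $\ell\ge q(m)$ for every divisor $\ell>1$ of $m$ (here $q(m)$ is the least prime divisor of $m$) gives
\[
  f_{C_m}(t)\;=\;1-\frac{\varphi(m)}{m}\,(1-t)-R_m(t),\qquad 0\le R_m(t)\le(1-t)^{q(m)}.
\]

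For the first assertion $B$ is fixed, so $\delta(A\power B)=f_B(\delta(A))$ and it suffices to show $\{\delta(A):A\le\sym_m\ \text{primitive},\ m\in\mathbb{N}\}$ is dense in $[0,1]$. I would obtain this from the groups $A=\mathrm{AGL}_1(p)\power C_m$: this product action is primitive by the product-action primitivity criterion, since $\mathrm{AGL}_1(p)=C_p\rtimes C_{p-1}$ is primitive and (for $p\ge3$) not regular while $C_m$ is transitive. The derangements of $\mathrm{AGL}_1(p)$ are precisely its $p-1$ nonzero translations, so $\delta(\mathrm{AGL}_1(p))=1/p$ and hence $\delta(A)=f_{C_m}(1/p)=1-\tfrac{\varphi(m)}{m}(1-1/p)-R_m(1/p)$. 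Now choose $m$ squarefree with every prime factor larger than $p^{2}$: then $R_m(1/p)\le(1-1/p)^{p^{2}}\to0$ as $p\to\infty$, while by Mertens' theorem, forming $m$ by adjoining such primes one at a time, $\varphi(m)/m=\prod_{q\mid m}(1-1/q)$ takes a decreasing sequence of values with consecutive gaps below $p^{-2}$ and infimum $0$. Thus as $p\to\infty$ the numbers $\delta(A)=1-\varphi(m)/m+o(1)$ become dense in $[0,1]$, and applying $f_B$ finishes the first assertion.

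For the second assertion $A$ is fixed and we take $B=C_n$ regular of degree $n$; write $x=1-\delta(A)$, which we may assume satisfies $x<1$. By the displayed formula, $\delta(A\power C_n)=f_{C_n}(\delta(A))=1-\tfrac{\varphi(n)}{n}x-R_n$ with $0\le R_n\le x^{q(n)}$. Choosing $n$ squarefree with all prime factors exceeding $M$ makes $R_n\le x^{M}\to0$ as $M\to\infty$, while (exactly as before) $\varphi(n)/n$ runs through a $(1/M)$-dense subset of $[0,1]$. Hence $\delta(A\power C_n)$ approximates $1-\rho x$ for every $\rho\in[0,1]$, i.e.\ these values are dense in $[1-x,1]=[\delta(A),1]$.

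The computations of $f_{C_n}$ and the estimate $R_n\le x^{q(n)}$ are routine, as is the fact that $f_B$ is an increasing homeomorphism. The real content is the density of $\{\varphi(n)/n\}$ with small jumps (Mertens' theorem), and the main technical obstacle is the coordination of parameters in the first assertion: one must push the prime factors of $m$ far above $p$ so that $R_m(1/p)$ is negligible, yet still keep $\mathrm{AGL}_1(p)\power C_m$ primitive and keep $\varphi(m)/m$ sweeping a sufficiently fine net of $[0,1]$; checking that these three requirements are simultaneously met is where I expect the care to lie.
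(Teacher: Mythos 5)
Your proposal is correct, and for the second claim it is essentially the paper's own argument: both take $B$ to be a regular cyclic group of squarefree order $n=p_1\cdots p_r$ with large prime factors, use $\stir{C_n}{\ell}=\varphi(n/\ell)$ to write $\delta(A\power C_n)=1-\tfrac{\varphi(n)}{n}\,(1-\delta(A))-R_n$, and let the divergence of $\sum 1/p$ (Mertens) drive $\varphi(n)/n$ through a fine net of $[0,1]$ while the tail $R_n$ is killed by forcing the least prime factor to be large. Your write-up is slightly more streamlined: you hit each target $1-\rho\,(1-\delta(A))$ directly with one two-sided estimate, whereas the paper builds an increasing sequence $\delta(A\power Z_r)$ with small increments $D(n,q)$ and separately shows it starts near $\delta(A)$ and tends to $1$; the content is the same. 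For the first claim the paper simply composes the increasing map $x\mapsto 1-\mathcal{C}_B(1-x)$ with the known density of $\{\delta(A)\mid A\ \text{primitive}\}$ in $[0,1]$, which is Theorem~\ref{boston}(c), quoted from \cite{Boston} and available earlier in the paper; you instead re-prove that density from scratch via the primitive groups $\mathrm{AGL}_1(p)\power C_m$ (primitive by the product-action criterion), $\delta(\mathrm{AGL}_1(p))=1/p$, and the same totient-ratio density. That buys self-containedness at the cost of redoing \cite[Theorem~5.11]{Boston}; citing Theorem~\ref{boston}(c) would have ended the first claim in two lines. Two small points worth tightening: you should record that $\delta(A\power B)\ge\delta(A)$ for every $B$ (immediate from $\mathcal{C}_B(x)\le x$ on $[0,1]$), so the values in the second claim actually lie in $[\delta(A),1]$ and not merely accumulate on it; and the step ``we may assume $x<1$'' hides a genuine degeneracy --- if $\delta(A)=0$ then $\delta(A\power B)=0$ for all $B$ and the claimed density fails --- but the paper's proof makes the same tacit assumption ($0<z<1$), so this is not a defect relative to it.
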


\begin{theorem}\label{thm_power_new1}
  Fix $n\ge2$ and $B\le\sym_n$. Let $(C_m)_{m\ge1}$ and $(A_m)_{m\ge1}$ be sequences of subgroups such that $C_m\lhdeq A_m\le\sym_m$. Let $(G_m)_{m\geq 1}$ be a sequence of subgroups such that $G_m\le\sym_{m^n}$ satisfies 
  $C_m^{[n]}\lhdeq G_m\le A_m\power B$, where $C_m^{[n]}\leq A_m^{[n]}$ is the subgroup of all functions $[n]\to C_m$, and $B=\pi_n(G)$ is the image of the projection map $\pi_n\colon \sym_m\power \sym_n\to \sym_n$. Suppose there exists $\delta_0$  such that $\lim_{m\to\infty} \delta(C_ma_m)=\delta_0$ for each sequence $(a_m)_{m\ge1}$ of elements $a_m\in A_m$. Then
  \[
   \lim_{m\to\infty}\delta(G_m)
   =1-\frac{1}{|B|}\sum_{\ell=1}^n\stir{B}{\ell}(1-\delta_0)^\ell.
  \]
\end{theorem}

Theorem~\ref{thm_imprim1} is proved in Section~\ref{sec_imprim}. Most of the paper
deals with power action in Section~\ref{sec:power} where
Theorems~\ref{thm_power_dense1} and~\ref{thm_power_new1} are proved.
{Theorem~\ref{thm_power}(b) is a generalisation of~\cite[Theorem~5.4(2)]{Boston} (see also Theorem~\ref{boston}(b) below). Derangements of `large' primitive subgroups $G\leq \sym_m\power\sym_n$, i.e. those satisfying $\alt_m^{[n]}\lhdeq G\le \sym_m\power\sym_n$, are considered in Corollary~\ref{C:large}.}

Further results of this paper include the determination of $\delta_k(G)$ for sharply $t$-transitive subgroups $G\le\sym_n$, see Section~\ref{sec:sharp}, and the determination of $\delta_k(C)$ for direct
products $C_1\times\dots\times C_r$ with intransitive or product actions,
see Theorems~\ref{thm_productNEW} and \ref{thm_intrans}.

\subsection{Motivation}\label{sec_motivation}
  The motivation that led us to Theorem~\ref{thm_power_new1} involved
  the study of primitive permutation groups that are `large', or
  are `diagonal'~\cite{LPS}. We explain the former. A \emph{base}
  for a permutation group $G\le\sym_n$ is a set $\{x_1,\dots,x_m\}$
  of points of $[n]$ such that the elementwise stabiliser
  $G_{(x_1,\dots,x_m)}$ is trivial.  The minimal size of a base for
  $G\le\sym_n$ is denoted $b(G,[n])$, or $b(G)$. It is clear that
  $|G|\le\prod_{i=0}^{b(G)-1}(n-i)$. In fact, the size of $b(G)$ is a
  good proxy for the size of $G$ for reasons that are related to
  Pyber's (now solved) base size conjecture, see~\cite{DHM}.  When
  considering permutation groups computationally or theoretically,
  groups with small base size (and hence small order) are treated
  very differently. For example, Seress describes very different
  algorithms for small and large base groups in~\cite{Seress}, and the
  `large' primitive groups with product action are considered
  separately by Maróti~\cite{Maroti}. In particular, primitive groups
  are frequently divided into two categories  (`small' and `large'), see
  \cite[Theorem~1.1]{Maroti}.
  The \emph{`small'} primitive subgroups
  $G\le\sym_N$  satisfy $|G| \leq  N^{1+{\lfloor\log_2(N)\rfloor}}$ or are one for four
  simple groups,  and the \emph{`large'} primitive
  groups satisfy $\alt_m^{[n]} \lhdeq G \le \sym_m\wr\sym_n$ where
  $\sym_m$ acts on $k$-subsets of $[m]$ and $N=\binom{m}{k}^n$.
  `Large' primitive groups arise when considering orders
  \cite[Theorem~1.1]{Maroti}, base sizes \cite[Theorem]{Liebeck84},
  and minimal degrees \cite[Theorem 2]{LiebeckSaxl} of primitive groups. {We note that the definition of `large' groups does not depend on the Classification of Finite Simple Groups, but the fact that they are almost always larger than `small' groups does.}

The `large' primitive permutation groups are important in many
computational and theoretical contexts. Given such a group, we
wondered whether knowing only $\delta(G)$ was enough to determine the
projection $\pi(G)\le\sym_n$ in Theorem~\ref{thm_power_new1}. In the
light of Theorem~\ref{thm_power_dense1} this may seem like an
impossible hope.  However, if $G=A\power B$ is the full wreath product, then
$\delta(G)=1-\mathcal{C}_B(1-\delta(A))$ holds
by~\cite[Theorem~5.4(2)]{Boston}. If $A=\sym_m$, then $\delta(G)$
strongly influences the polynomial $\mathcal{C}_B(x)$ when $n$ is
small, and we can sometimes recover the group~$B$.  (For example, if
$n<6$, then the polynomial $\mathcal{C}_B(x)$ determines the group
$B$, and if $B$ is primitive, then $\mathcal{C}_B(x)$ determines $B$
if $n<64$.)  By taking $C=\alt_m$, Theorem~\ref{thm_power_new1} gives
hope for recovering $\pi(G)$ from $\delta(G)$ even when $G$ is
\emph{not} a wreath product. In order to estimate $\delta(G)$
  when $\alt_m^{[n]}\lhdeq G\le \sym_m\power\sym_n$, we consider
  proportions of elements in \emph{subsets} of $G$.

Our interest was piqued by a recent
announcement~\cite[Theorem 1.1]{Poonen} 
that if
$G\leq\sym_n$ is a subgroup with $\delta(Gc)=\delta(\sym_n)$ for
some $c\in \sym_n$, then $Gc=\sym_n$ and hence $G=\sym_n$;
see~\cite[Remark 1.2]{Poonen} for a comment on subsets $C\leq \sym_n$
with $\delta(C)=\delta(\sym_n)$.
{On a computational note, we mention that Arvind~\cite{arvind} presents
  an algorithm which takes as input $k\in[n]$ and a subgroup
  $\langle S\rangle\le\sym_n$ and outputs a permutation in $\langle S\rangle$
  that moves at least~$k$ points: to find a derangement set $k=n$.}

\section{Background}\label{sec_background}
\noindent If $G\leq \sym_n$ is a transitive subgroup
and $n\geq 2$, then the Orbit-Counting Theorem implies that
$\Der(G)$ is non-empty; {this result is due to Jordan, see~\cite[Theorem 4]{Serre}.}
Given a transitive permutation group $G\leq \sym_n$, its \emph{rank} $r$  is the number of
orbits of a point-stabiliser on $[n]$. For such a $G$, 
the following bounds hold:
\begin{equation}\label{eq_DFG}
  \frac{r-1}{n}\leq \pder{G} \leq 1-\frac{1}{r}. 
\end{equation}
The lower bound was proved by {Cameron and Cohen~\cite{CC92}}, and the
upper bound by {Diaconis, Fulman, and Guralnick~\cite[Theorem 3.1]{DFG08}}, see {also Guralnick, Isaacs, and Spiga~\cite{guralnick} for a short proof.}

Serre~\cite{Serre} describes some interesting consequences of the
existence of derangements to number theory and topology. 
{Fulman and Guralnick~\cite{FG18}} show that
if $G$ is a sufficiently large finite simple group acting faithfully
and transitively on~$[n]$,
then $\delta(G)\geq 0.016$. This result completes the proof of
the Boston--Shalev Conjecture, which claims that there is a constant
$\varepsilon>0$ such that $\delta(G)>\varepsilon$ for any such group~$G$.

In the course of our research, we proved three (quite natural) results that
were previously proved by
Boston \emph{et al.}~\cite[Theorems 4.3, 5.4(2), 5.11]{Boston}.

\begin{theorem}[Boston \emph{et al.}\ \cite{Boston}]\label{boston}
  Let $A\leq \sym_m$ and $B\leq \sym_n$. Then
  \begin{enumerate}[{\rm (a)}]
  \item $\delta(A\imprim B)= \mathcal{P}_B(\delta(A))$ where
    $\mathcal{P}_B(x)=\sum_{\ell=0}^n \delta_\ell(B)x^\ell$.
    \item $\delta(A\power B)=1-\mathcal{C}_B(1-\delta(A))$ where
       $\mathcal{C}_B(x)= \frac{1}{|B|}\sum_{\ell=1}^n \stir{B}{\ell}x^\ell$.
    \item The set $\{\delta(C)\mid \text{$C\le\sym_n$ primitive},\; n\in\mathbb{N}\}$ is dense in $[0,1]$.
  \end{enumerate}
\end{theorem}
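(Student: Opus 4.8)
The plan is to read (a) and (b) off Theorem~\ref{thm_formulas} and to prove (c) via the product action. For $m,n\ge2$ the formulas $\delta(A\imprim B)=\sum_{\ell=0}^{n}\delta_\ell(B)\delta(A)^\ell$ and $\delta(A\power B)=1-\frac{1}{|B|}\sum_{\ell=1}^{n}\stir{B}{\ell}(1-\delta(A))^\ell$ in Theorem~\ref{thm_formulas} are exactly (a) and (b) once one names the two polynomials $\mathcal{P}_B$ and $\mathcal{C}_B$; the cases $m=1$ or $n=1$ are trivial. (Alternatively, (a) and (b) follow directly: writing $g=(a_1,\dots,a_n;b)\in A\wr B$ with $a_i\in A$ and $b\in B$, one checks that $g$ is a derangement in the imprimitive action iff $a_j\in\Der(A)$ for every fixed point $j$ of $b$, and in the power action iff the product of the corresponding $a_j$ around some cycle of $b$ is a derangement of $A$; summing over $b$ according to its number of fixed points, respectively cycles, and using that these cycle-products are distributed uniformly over $A$, yields the two formulas after dividing by $|A|^n|B|$.)

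For (c) I would realise a dense set of derangement proportions by genuinely \emph{primitive} groups, using the standard criterion that $A\power B\le\sym_{m^n}$ is primitive whenever $A\le\sym_m$ is primitive and not regular and $B\le\sym_n$ is transitive. Since $\delta>0$ for every transitive group of degree $\ge2$, no single primitive group has $\delta=0$, so I would take a family tending to $0$: for each prime $p\ge3$ set $A_p=\mathrm{AGL}_1(p)=\mathbb{F}_p\rtimes\mathbb{F}_p^\times\le\sym_p$, which is primitive (being transitive of prime degree) and not regular, and whose derangements are precisely the $p-1$ nontrivial translations, so that $\delta(A_p)=1/p$. For every regular $B\le\sym_n$ the group $A_p\power B$ is then primitive, and by the second statement of Theorem~\ref{thm_power_dense1} the set $\{\delta(A_p\power B)\mid B\le\sym_n\text{ regular},\ n\in\mathbb{N}\}$ is dense in $[\delta(A_p),1]=[1/p,1]$. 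Taking the union over all primes $p\ge3$ and noting that $\bigcup_{p\ge3}[1/p,1]=(0,1]$ has closure $[0,1]$, we conclude that $\{\delta(C)\mid C\le\sym_N\text{ primitive},\ N\in\mathbb{N}\}$ is dense in $[0,1]$, which is (c).

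The only substantive step is (c); parts (a) and (b) are just the derangement formulas already recorded in Theorem~\ref{thm_formulas}, and the cycle-by-cycle counts underlying them are routine. Within (c), the point to get right — and the one I would be most careful about — is that the witnessing groups must be \emph{primitive}: this is why I would invoke the \emph{second} part of Theorem~\ref{thm_power_dense1} (fixed $A$, varying regular $B$) rather than the first, since with $A=A_p$ already primitive and non-regular the product-action criterion makes every $A_p\power B$ primitive automatically, with no exceptional cases to discard. The remaining ingredients — the product-action primitivity criterion and the elementary identity $\delta(\mathrm{AGL}_1(p))=1/p$ — are standard, so I do not expect a real obstacle beyond setting up this reduction correctly.
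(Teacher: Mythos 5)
Parts (a) and (b) of your proposal coincide with what the paper does: Theorem~\ref{boston}(a),(b) are exactly the $k=0$ cases of Theorems~\ref{thm_imprim_k} and~\ref{thm_power}, and your parenthetical cycle-by-cycle sketch is the same counting argument the paper carries out there. Part (c) is where you genuinely diverge, because the paper offers \emph{no} proof of (c) at all --- it is imported verbatim from Boston et al.\ (their Theorem~5.11). Your derivation of (c) from the paper's own machinery is correct: $\mathrm{AGL}_1(p)$ is primitive and non-regular with $\delta(\mathrm{AGL}_1(p))=1/p$ (consistent with $\mathcal{P}_C(0)=1/q$ in the proof of Theorem~\ref{thm_imprim1}), the Dixon--Mortimer criterion quoted in Section~\ref{sec_actions} makes every $A_p\power B$ with $B$ regular of degree at least $2$ primitive, and the closure of $\bigcup_{p}[1/p,1]$ is $[0,1]$. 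One logical subtlety deserves to be made explicit, though: you must use only the \emph{second} half of Theorem~\ref{thm_power_dense1}, and not merely for the primitivity reason you give. The paper's proof of the \emph{first} half of Theorem~\ref{thm_power_dense1} invokes Theorem~\ref{boston}(c), so using it here would be circular; the second half, by contrast, rests only on Theorem~\ref{boston}(b) (already established via Theorem~\ref{thm_power}), Corollary~\ref{cor_cyclic}, and elementary facts about infinite products, so your route is non-circular --- but it does require reordering the paper's exposition, since you are proving a Section~2 statement from a Section~6 theorem. With that dependency noted, your argument upgrades (c) from a citation to a self-contained consequence of the paper's results, which is a genuine (if modest) gain over what the paper records.
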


\subsection{Cycle index polynomials and derangements}\label{sec:cycleindex}
For $g\in\sym_n$ and $i\in[n]$ let $c_i(g)$ denote the number of $i$-cycles
in the disjoint cycle decomposition of~$g$, and {let $c(g)$ denote the number of cycles of $g$, so that $c(g)=\sum_{i=1}^n c_i(g)$}.
The well-known \emph{cycle index polynomial} $\mathcal{Z}(G)$ of $G\leq \sym_n$ is defined
to be the multivariate polynomial  
\[
\mathcal{Z}(G)=\mathcal{Z}(G;x_1,\dots,x_n) = \frac{1}{|G|}\sum_{g\in G} x_1^{c_1(g)}\cdots x_n^{c_n(g)}.
\]
Specialising the variables $x_i$ allows one to obtain polynomials with fewer variables which
are easier to calculate. For example, the  probability generating functions  $\mathcal{P}_G(x)$ and $\mathcal{C}_G(x)$
for the number of fixed points and the number of permutations with $\ell$ cycles, respectively, are
\begin{align*}
  \mathcal{P}_G(x)&=\mathcal{Z}(G;x,1,\dots,1)
  =\frac{1}{|G|}\sum_{\ell=0}^n |\Fix_\ell(G)| x^\ell
  =\sum_{\ell=0}^n \delta_\ell(G) x^\ell\qquad\textup{and}\\
  \mathcal{C}_G(x)&=\mathcal{Z}(G;x,x,\dots,x)
   =\frac{1}{|G|}\sum_{g\in G} x^{c(g)}
  =\frac{1}{|G|}\sum_{\ell=1}^n \stir{G}{\ell} x^\ell,
\end{align*}
where, as in Section \ref{sec_intro}, we denote
{the number of $g\in G$ with $c(g)=\ell$ by
  \begin{eqnarray}\label{eq_stir}\stir{G}{\ell}=\left|\{g\in G \mid c(g)=\ell\}\right|.
\end{eqnarray}}Clearly, $g\in\Fix_k(G)$ if and only if $c_1(g)=k$; in particular, $\delta(G)=\mathcal{P}_G(0)$. We note that for $k=0$, the formulas in Theorems \ref{thm_formulas} and~\ref{thm_power_new1} can be expressed using the cycle index polynomial.

Formulas for the cycle index polynomials $\mathcal{Z}(A\imprim B)$
and $\mathcal{Z}(A\power B)$ date back to {P\'{o}lya and to Palmer and Robinson,}
respectively \cite[Theorems 1, 2]{PR}. Our proof of Theorem~\ref{thm_formulas}
does not use these formulas, as they are rather complicated, especially
for $\mathcal{Z}(A\power B)$. The GAP package WPE~\cite{Rober} is a useful
research tool which, in particular, can compute $\mathcal{Z}(A\power B)$. {We note that the fixed point polynomials and cycle polynomials are also studied in \cite{semeraro,harden}. For example,
  $\mathcal{C}_{A\wr_I B}(x)=\mathcal{C}_B(\mathcal{C}_A(x))$ holds
  for all $A\leq \sym_m$ and $B\leq \sym_n$ by~\cite[Proposition~8]{semeraro}.
}

\subsection{Group actions}\label{sec_actions}
We denote a group $G$ acting on a set $X$ by $(G,X)$. The corresponding homomorphism $\varphi\colon G\to\Sym (X)$ allows us
to identify $G/\ker\varphi$ with the subgroup $H=\varphi(G)\leq
\sym_n$ where $|X|=n$. {Since
  \[
  \frac{|\{g\in G\mid \textup{$\varphi(g)$ is a derangement}\}|}{|G|}
  =
  \frac{|\{h\in H\mid \textup{$h$ is a derangement}\}|}{|H|}
  \]
we define $\delta(G,X)=\delta(\varphi(G))$.} Thus
we shall henceforth consider \emph{faithful} actions, and view $G$ as
a \emph{subgroup} of $\Sym(X)$.
Moreover, if $Y\to X$ is a
surjection of $G$-sets, $\delta(G,Y)\ge\delta(G,X)$ holds,
see~\cite[p.~3]{FG03}, which implies that  when considering lower
bounds for the proportion of derangements of a group (for any action),
one can restrict to primitive actions.

Given a group $A$ and a permutation group $B\le\Sym(Y)$
the permutational wreath product $A\wr_Y B$ is defined as the
split extension $A^Y\rtimes B$ where $A^Y$ is the group of all
functions $Y\to A$ with pointwise multiplication. The group
$A^Y\rtimes B$ has underlying set $A^{Y}\times B$ and multiplication
\[
(\alpha,b)(\beta,c)=(\alpha\beta^{b^{-1}},bc)
\qquad\textup{where $\alpha,\beta\in A^Y$, $b,c\in B$,}
\]
and $\beta^b\colon Y\to A$ is the map $y\mapsto\beta(yb^{-1})$.
Following {Cameron, Gewurz, and Merola~\cite{CGM08}}, given permutation
groups $A\le\Sym(X)$ and $B\le\Sym(Y)$ the
direct product $A\times B$ has an \emph{intransitive} action~\Ix\
and a \emph{product} action~\Px\, and the wreath product $A\wr_Y B$ has
an \emph{imprimitive} action~\Iw\ and a \emph{power} action~\Pw,
defined as follows. To avoid towers of exponents, we write the action
of $g\in G$ on $x\in X$ as $xg$, and not $x^g$. Also $X\,\dot\cup\, Y$
denotes the disjoint union of $X$ and $Y$.

\begin{enumerate}
\item[\Ix]$(a,b)\in A\times B$ acts on $X\,\dot\cup\, Y$
  via $x(a,b)=xa$ if $x\in X$, and $y(a,b)=yb$ if $y\in Y$.
\item[\Px] $(a,b)\in A\times B$ acts on  $(x,y)\in X\times Y$ via
  $(x,y)(a,b) \defi (xa,yb)$.
\item[\Iw] $(\alpha,b)\in A\wr_Y B$ acts on $(x,y)\in X\times Y$ via
  $(x,y)(\alpha,b)\defi (x\alpha(y),yb)$.
\item[\Pw] $(\alpha,b)\in A\wr_Y B$ acts on $\omega\in X^Y$ via
 $\omega(\alpha,b)\colon Y\to X$,  $y\mapsto \omega(yb^{-1})\alpha(yb^{-1})$.
\end{enumerate}

In the case that $Y=[n]$, we identify  $\omega\in X^Y$ and $\alpha\in A^Y$
with $n$-tuples $(\omega_1,\dots,\omega_n)$, $(\alpha_1,\dots,\alpha_n)$
where each $\omega_i=\omega(i)$ and $\alpha_i=\alpha(i)$, respectively.
 The power action \Pw\ of the
base group $A^Y$ coincides with (iterated) product action \Px. Most authors call \Pw\ product action; although unconventional, we shall refer to \Pw\ as power action {to avoid confusion with \Px.} If $A\leq \Sym(X)$ is primitive but not regular, $B\leq \Sym(Y)$ is transitive, $|Y|$ is finite, and $1<|X|,|Y|$, then  $A\wr_Y B\le\Sym(X^Y)$
is primitive by~\cite[Lemma~2.7A]{DM}, so \Pw\ can suggest a
\emph{primitive}/\emph{product}/\emph{power wreath} product, so \Pw\ is a good abbreviation.
We change the subscript in $A\wr_Y B$ in favour of the
notation $\imprim$ or~$\power$.

\begin{notation} We abbreviate $(A\times B,X\,\dot\cup\, Y)$,
  $(A \times B,X\times Y)$, $(A \wr_Y B,X\times Y)$, and  $(A\wr_Y B,X^Y)$
  by $A\intrans B$,\; $A\product B$,\; $A\imprim B$, and $A\power B$,
  respectively.
\end{notation}

 We emphasise that there is a permutation isomorphism
  $(A\imprim B)\imprim C\cong A\imprim (B\imprim C)$, but associativity
  fails for $\power$. Indeed $|(X^Y)^Z|=|X^{Y\times Z}|\ne |X^{(Y^Z)}|$
  if $|Y|\ge2$, $|Z|\ge2$ and $|Y|^{|Z|}\ne 4$.

\section{Sharply transitive groups}\label{sec:sharp}

\noindent Let $G\leq \sym_n$ be a subgroup that acts sharply $t$-transitively, that is,
it acts regularly on the set of $t$-tuples with distinct entries in $[n]$.
Hence $|G|$ equals the number $n!/(n-t)!$ of such~tuples. 

\begin{theorem}\label{AnSnNEW}
  Let $n\geq 2$, $t\in [n]$, and $k\in\{0,\ldots,n\}$. If $G\leq \sym_n$
  is sharply $t$-transitive, then $\delta_n(G)=1/|G|$,
  $\delta_k(G)=0$ if $t\leq k<n$, and
  \[
    \delta_k(G)
    = \frac{1}{k!}\sum_{j=0}^{t-k-1}\frac{(-1)^j}{j!}
    +\frac{(n-t)!}{k!}\sum_{j={t-k}}^{n-k} \frac{(-1)^{j}}{(n-k-j)!j!}
    \qquad\text{if $0\le k<t$.}
  \]  
\end{theorem}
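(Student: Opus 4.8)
The plan is to reduce everything to counting, for each subset $S\subseteq[n]$, the number $N_{|S|}$ of $g\in G$ that fix $S$ pointwise, and then to recover $|\Fix_k(G)|$ by inclusion--exclusion (M\"obius inversion on the Boolean lattice). The essential input is that sharp $t$-transitivity pins down $N_j$ exactly.

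First I would dispose of the two easy cases. Since the action on $[n]$ is faithful, only the identity lies in $\Fix_n(G)$, so $\delta_n(G)=1/|G|$. If $t\le k<n$ and $g\in\Fix_k(G)$, then $g$ fixes some $t$-subset pointwise, hence fixes a $t$-tuple of distinct points; by regularity on such tuples $g=1$, forcing $k=n$, a contradiction. Hence $\Fix_k(G)=\varnothing$ when $t\le k<n$.

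Now fix $k<t$. For a $j$-subset $S$ with $j\le t$ the pointwise stabiliser $G_{(S)}$ acts regularly on the $(t-j)$-tuples of distinct points of $[n]\setminus S$ (append such a tuple to an enumeration of $S$ and use regularity of $G$ on $t$-tuples), so $N_j\defi|G_{(S)}|=(n-j)!/(n-t)!$, depending only on $j$; and for $j\ge t$ only the identity fixes $S$ pointwise, so $N_j=1$ (consistent with the previous value at $j=t$). Counting pairs $(S,T)$ with $|S|=k$, $|T|=i$, $S\subseteq T$, and using $\binom{n}{i}\binom{i}{k}=\binom{n}{k}\binom{n-k}{i-k}$, inclusion--exclusion over $T\supseteq S$ gives
\[
|\Fix_k(G)|=\binom{n}{k}\sum_{i=k}^{n}(-1)^{i-k}\binom{n-k}{i-k}\,N_i,
\qquad\text{hence}\qquad
\delta_k(G)=\frac{(n-t)!}{k!\,(n-k)!}\sum_{i=k}^{n}(-1)^{i-k}\binom{n-k}{i-k}\,N_i,
\]
using $|G|=n!/(n-t)!$ and $\binom{n}{k}(n-t)!/n!=(n-t)!/(k!\,(n-k)!)$.

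It then remains to split the sum at $i=t$ and simplify. For $k\le i\le t-1$ I substitute $N_i=(n-i)!/(n-t)!$; the factor $(n-i)!$ cancels with $\binom{n-k}{i-k}$ and $(n-t)!$ cancels, leaving $\tfrac1{k!}\sum_{i=k}^{t-1}\tfrac{(-1)^{i-k}}{(i-k)!}$, which is the first sum in the statement after setting $j=i-k$. For $t\le i\le n$ I substitute $N_i=1$, obtaining $\tfrac{(n-t)!}{k!}\sum_{i=t}^{n}\tfrac{(-1)^{i-k}}{(i-k)!\,(n-i)!}$, which is the second sum after $j=i-k$ (so $n-i=n-k-j$). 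The only delicate points are the book-keeping at the boundary $i=t$ (both expressions for $N_i$ agree there, so it can be placed in either range) and the degenerate endpoints; there is no substantial obstacle beyond this routine algebra. As a sanity check, taking $t=n$ collapses the answer to the classical value $\delta_k(\sym_n)=\tfrac1{k!}\sum_{j=0}^{n-k}\tfrac{(-1)^j}{j!}$.
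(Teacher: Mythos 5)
Your proof is correct and rests on the same two ingredients as the paper's: sharp $t$-transitivity pins down $|G_{(S)}|=(n-|S|)!/(n-t)!$ for $|S|\le t$ (and $=1$ for $|S|\ge t$), followed by inclusion--exclusion, and your boundary bookkeeping at $i=t$ and the final change of index $j=i-k$ check out. The only difference is organisational: you perform a single M\"obius inversion over supersets to obtain $|\Fix_k(G)|$ directly, whereas the paper first derives the $k=0$ formula and then reduces general $k$ to it via $|\Fix_k(G)|=\binom{n}{k}|G_{(K)}|\,\delta(G_{(K)},K')$ for a $k$-set $K$ --- unwinding that reduction reproduces exactly your sum.
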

\begin{proof}
  Note that $|\Fix_n(G)|=1$ and $|\Fix_k(G)|=0$ for $t\leq k<n$, so
  the claim is true for $k\geq t$.
  Suppose now that $k<t$.  For a subset $K\subseteq [n]$ let
  $G_{(K)}$ be the elementwise stabiliser of $K$ in $G$.
  Note that $|\Der(G,[n])|=|G|-|{F}|$, where ${F}=G_{(1)}\cup\dots\cup G_{(n)}$. This number can be determined using a standard inclusion-exclusion argument (see also \cite[Theorem 2.3]{Boston}), and we obtain
  \begin{equation}\label{E:B}
  \delta(G,[n])=\sum_{j=0}^{t-1}\frac{(-1)^j}{j!}
    +(n-t)!\sum_{j=t}^{n}\frac{(-1)^j}{j!(n-j)!}.
  \end{equation}
  The set $K$ of fixed points of $g\in\Fix_k(G)$ has size $k$, and $g$
  induces a derangement on the complement $K'=[n]\setminus K$. We view
  $G_{(K)}$ as a sharply $(t-k)$-transitive permutation group of
  degree $|K'|=n-k$. Since $|G_{(K)}|=(n-k)!/(n-t)!$, we have
  \begin{equation}\label{E:B2}
  |\Fix_k(G)|=\binom{n}{k}|G_{(K)}|\,\delta(G_{(K)},K')
  =\frac{n!}{k!(n-t)!}\,\delta(G_{(K)},K')
  =\frac{|G|}{k!}\,\delta(G_{(K)},K').
  \end{equation}
  Dividing by $|G|$, and using~\eqref{E:B} with $n$ and $t$ replaced by $n-k$
  and $t-k$ proves the claim.
\end{proof}

\begin{remark}
  For an alternative proof of Theorem~\ref{AnSnNEW}, let
  $\mathcal{Q}_G(x)=\sum_{\ell=0}^n a_\ell x^\ell/\ell!$ where 
  $a_\ell$ denotes the number of orbits of $G$ on $\ell$-tuples of
  distinct points. If $G$ is sharply $t$-transitive, then
  $a_\ell=\prod_{i=1}^{\ell-t}(n-\ell+i)$ if $t<\ell\le n$, and $a_\ell=1$
  if $\ell\le t$. We may deduce $\delta_k(G)$ from the
  polynomial identity $\mathcal{P}_G(x)=\sum_{k=0}^n\delta_k(G)x^k=\mathcal{Q}_G(x-1)$
  see~\cite{Boston} or~\cite[Theorem~1.1]{Cameron}.
\end{remark}

Throughout, we define $d_0=e_0=1$,
and if $n\geq 1$ is an integer, then 
\[
d_n\defi\sum_{k=0}^{n} \frac{(-1)^k}{k!}\quad\text{and}\quad
e_n\defi\sum_{k=n-1}^{n} \frac{(-1)^k}{k!}=\frac{(-1)^{n-1}(n-1)}{n!}.
\]
Note that  $\lim_{n\to\infty}e_n=0$ and $\lim_{n\to\infty} d_n = e^{-1}$,
where $e$ denotes the Euler number. Also, if $n\ge2$,
then $\sym_n$ is sharply $(n-1)$-transitive, and $\alt_n$ is sharply
$(n-2)$-transitive. The next result follows from
Theorem \ref{AnSnNEW} with a little algebra.
  
\begin{corollary}\label{OLDAnSn}
  If $n\geq 1$ and $k\in\{0,\dots,n\}$, then
  \begin{enumerate}[{\rm(a)}]
    \item $\delta_k(\sym_n)=d_{n-k}/k!$,
    \item $\delta_k(\alt_n)=(d_{n-k}+e_{n-k})/k!$, and 
    \item $\delta_k(\sym_n\setminus \alt_n)=(d_{n-k}-e_{n-k})/k!$.
  \end{enumerate} 
 If $k$ is fixed, then  $\lim_{n\to\infty}\pderk{k}{\sym_n}=\lim_{n\to\infty}\pderk{k}{\alt_n}
  =\lim_{n\to\infty}\pderk{k}{\sym_n\setminus \alt_n}=e^{-1}/k!$.
\end{corollary}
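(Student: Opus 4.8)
The plan is to derive Corollary~\ref{OLDAnSn} directly from Theorem~\ref{AnSnNEW} by specialising $t$ to $n-1$ for $\sym_n$ and to $n-2$ for $\alt_n$, and then performing the index manipulations needed to recognise the sums $d_{n-k}$ and $e_{n-k}$. First I would treat part~(a). With $t=n-1$ the hypothesis ``$t\le k<n$'' becomes ``$n-1\le k<n$'', which is vacuous, and ``$0\le k<t$'' becomes ``$0\le k\le n-2$''; together with the boundary value $\delta_n(\sym_n)=1/n!$ from the theorem (which equals $d_0/n!$, consistent with the claimed formula at $k=n$) it suffices to check the range $0\le k\le n-1$. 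Plugging $t=n-1$ into the displayed formula, the first sum runs over $0\le j\le t-k-1=n-k-2$ and contributes $\frac{1}{k!}\sum_{j=0}^{n-k-2}\frac{(-1)^j}{j!}$; the second sum runs over $t-k=n-k-1\le j\le n-k$, so it has exactly two terms, $j=n-k-1$ and $j=n-k$, and after multiplying by $(n-t)!=1!=1$ it equals $\frac{1}{k!}\bigl(\frac{(-1)^{n-k-1}}{(n-k)\cdot 1!}+\frac{(-1)^{n-k}}{0!\,(n-k)!}\bigr)$; checking $\frac{(-1)^{n-k-1}}{n-k}+\frac{(-1)^{n-k}}{(n-k)!}=\frac{(-1)^{n-k-1}}{(n-k)!}\bigl((n-k-1)!(n-k)-\cdots\bigr)$ — more simply, $\frac{(-1)^{n-k-1}}{(n-k)\cdot 1}=\frac{(-1)^{n-k-1}}{(n-k)!}\cdot (n-k-1)!$, hence the two terms together equal $\frac{(-1)^{n-k-1}}{(n-k)!}+\frac{(-1)^{n-k}}{(n-k)!}=0$ only if $n-k=1$; in general they sum to $\frac{(-1)^{n-k-1}}{n-k}+\frac{(-1)^{n-k}}{(n-k)!}$, and adding this to the first sum and reindexing recovers exactly $\frac{1}{k!}\sum_{j=0}^{n-k}\frac{(-1)^j}{j!}=d_{n-k}/k!$. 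The arithmetic to see that $\frac{(-1)^{n-k-1}}{n-k}$ is precisely the ``missing'' $j=n-k-1$ term $\frac{(-1)^{n-k-1}}{(n-k-1)!}$ needs $(n-k-1)!=n-k$, which is false in general, so instead one keeps $\frac{(-1)^{n-k-1}}{(n-k-1)!}$ honest by noting that with $t=n-1$ one has $(n-t)!\cdot\frac{1}{(n-k-j)!\,j!}$ at $j=n-k-1$ equal to $\frac{1}{1!\,(n-k-1)!}$, i.e.\ the formula already produces the correct factorial; I will present this cleanly rather than through the false simplification above.

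For part~(b), set $t=n-2$ (valid since $\alt_n$ is sharply $(n-2)$-transitive for $n\ge 2$; the small cases $n=1,2$ I check by hand against $d_{n-k}+e_{n-k}$). Now the range $t\le k<n$ is ``$n-2\le k\le n-1$'': the theorem gives $\delta_k(\alt_n)=0$ there, and I must verify $(d_{n-k}+e_{n-k})/k!=0$ for $k=n-1$ and $k=n-2$, i.e.\ $d_1+e_1=0$ and $d_2+e_2=0$; indeed $d_1=0$, $e_1=0$, $d_2=\tfrac12$, $e_2=\tfrac{0!\cdot 1}{2!}$? — here $e_2=\sum_{k=1}^{2}\frac{(-1)^k}{k!}=-1+\tfrac12=-\tfrac12$, so $d_2+e_2=0$ as needed, and $\delta_n(\alt_n)=1/|\alt_n|=2/n!$ matches $(d_0+e_0)/n!=2/n!$. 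For $0\le k\le n-3$ I substitute $t=n-2$, so $(n-t)!=2!=2$: the first sum is $\frac{1}{k!}\sum_{j=0}^{n-k-3}\frac{(-1)^j}{j!}$ and the second is $\frac{2}{k!}\sum_{j=n-k-2}^{n-k}\frac{(-1)^j}{(n-k-j)!\,j!}$, a three-term sum ($j=n-k-2,n-k-1,n-k$) with denominators $2!\,(n-k-2)!$, $1!\,(n-k-1)!$, $0!\,(n-k)!$. Multiplying by $2$ kills the $2!$ and gives the $j=n-k-2$ term its correct weight $\frac{1}{(n-k-2)!}$, so the first sum plus this term completes $\sum_{j=0}^{n-k-2}\frac{(-1)^j}{j!}=d_{n-k-2}\cdot k!\,/k!$; the remaining two terms are $\frac{2(-1)^{n-k-1}}{(n-k-1)!}+\frac{2(-1)^{n-k}}{(n-k)!}=\frac{2(-1)^{n-k-1}}{(n-k)!}\bigl((n-k)-1\bigr)=\frac{2(-1)^{n-k-1}(n-k-1)}{(n-k)!}$. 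Comparing with the stated closed form $e_m=\frac{(-1)^{m-1}(m-1)}{m!}$ for $m=n-k$, this is exactly $2e_{n-k}$, and one checks $d_{n-k-2}+2e_{n-k}=d_{n-k}+e_{n-k}$ using $d_m-d_{m-2}=\frac{(-1)^m}{m!}+\frac{(-1)^{m-1}}{(m-1)!}=\frac{(-1)^{m-1}(m-1)}{m!}=e_m$, hence $d_{n-k-2}=d_{n-k}-e_{n-k-1}-e_{n-k}$? — I will instead use $d_m=d_{m-2}+e_{m-1}+e_m$ carefully; the cleanest identity is $d_m-d_{m-1}=\frac{(-1)^m}{m!}$ and $e_m=\frac{(-1)^{m-1}}{(m-1)!}+\frac{(-1)^m}{m!}$, from which $d_{m-2}+2e_m=d_{m-2}+\frac{2(-1)^{m-1}}{(m-1)!}+\frac{2(-1)^m}{m!}$, and one verifies term-by-term that this equals $d_m+e_m$. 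Part~(c) is then immediate from $|\sym_n\setminus\alt_n|=|\alt_n|$, which gives $\delta_k(\sym_n\setminus\alt_n)=2\delta_k(\sym_n)-\delta_k(\alt_n)=(2d_{n-k}-d_{n-k}-e_{n-k})/k!=(d_{n-k}-e_{n-k})/k!$.

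Finally, the limit statement: fix $k$ and let $n\to\infty$, so $m=n-k\to\infty$. Since $\lim_{m\to\infty}d_m=e^{-1}$ and $\lim_{m\to\infty}e_m=0$ (both noted in the text), all three of $d_{n-k}/k!$, $(d_{n-k}+e_{n-k})/k!$, $(d_{n-k}-e_{n-k})/k!$ converge to $e^{-1}/k!$. The main obstacle is purely bookkeeping: the formula in Theorem~\ref{AnSnNEW} splits the range of $j$ at $t-k$, and when $t\in\{n-1,n-2\}$ the ``tail'' sum collapses to two or three explicit terms whose factorial denominators must be matched against the definitions of $d_m$ and $e_m$; getting the reindexing and the factor $(n-t)!\in\{1,2\}$ right, and separately handling the degenerate small-$n$ and large-$k$ boundary cases where the claimed formula must evaluate to $0$ or to $1/|G|$, is where care is required, but no genuine difficulty arises.
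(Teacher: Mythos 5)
Your proposal is correct and follows exactly the route the paper intends: the paper gives no detailed argument, saying only that the corollary ``follows from Theorem~\ref{AnSnNEW} with a little algebra'', and your substitutions $t=n-1$ and $t=n-2$, the identity $d_m-d_{m-2}=e_m$ (so $d_{m-2}+2e_m=d_m+e_m$), and the boundary checks at $k\in\{n-2,n-1,n\}$ are precisely that algebra. The false starts you flag along the way (the misread denominator $(n-k-j)!\,j!$ in part~(a) and the detour in part~(b)) are all repaired before the final computation, so the argument stands as the paper's own proof, written out in full.
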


\begin{remark}
  If $G\leq \sym_n$ has $s$ orbits on $[n]$, then the Orbit-Counting Lemma
  implies that 
  \[
  \frac{1}{s}\sum_{k=0}^nk|\Fix_k(G)|=|G|=\sum_{k=0}^n|\Fix_k(G)|
  \quad\textup{and hence}\quad \delta(G)=  \sum_{k=1}^n \left(\frac{k}{s}-1\right)\delta_k(G).
  \]
  If  $(G,[n])$ is a sharply $t$-transitive subgroup of $\sym_n$,
  then $s=1$ and $|G|=n!/(n-t)!$, and Equation~\eqref{E:B2} transforms
  the latter equation for $\delta(G)$ into the following recurrence relation
 \[
  \delta(G,[n])=\frac{(n-t)!}{n(n-2)!}
  +\sum_{k=2}^{t-1}\frac{1}{k(k-2)!}\delta(G_{([k])},[n]\setminus[k]),
  \]
where $(G_{([k])},[n]\setminus[k])$ is a $(t-k)$-transitive subgroup of $\textup{Sym}([n]\setminus[k])\cong\sym_{n-k}$.
\end{remark}


\section{Direct products}\label{sec:2}

\subsection{Product action}\label{sec_product}
The following result generalises~\cite[Lemma~6.1]{Boston}.

\begin{theorem}\label{thm_productNEW}
 For $r>0$ and $i\in[r]$ let $G_i\leq \Sym(X_i)$. Let the subgroup
  $G=G_1\times\dots\times G_r$ of $\Sym(X_1\times\dots\times X_r)$
 act via \Px. {If $C\subseteq G$ is a subset of the
 form $C=C_1\times\dots\times C_r$ with
 each $C_i\subseteq G_i$}, then
 \[\pderk{k}{C}=\sum_{i_1\cdots i_r=k} \prod_{s=1}^r \delta_{i_s}(C_s)
 \quad\text{and}\quad
 \pder{C}=1-\prod_{i=1}^r(1-\delta(C_i))\quad\text{if $k=0$}.
 \]
\end{theorem}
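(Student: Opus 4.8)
The plan is to count fixed points in the product action $X_1\times\dots\times X_r$ directly from the definition $(x_1,\dots,x_r)(g_1,\dots,g_r)=(x_1g_1,\dots,x_rg_r)$. Since the $i$-th coordinate moves only under $g_i$, a tuple $(x_1,\dots,x_r)$ is fixed by $(g_1,\dots,g_r)$ if and only if $x_s$ is fixed by $g_s$ for every $s\in[r]$. Hence $\Fix((g_1,\dots,g_r),X_1\times\dots\times X_r)=\Fix(g_1,X_1)\times\dots\times\Fix(g_r,X_r)$, and so the number of fixed points of $(g_1,\dots,g_r)$ is the product $\prod_{s=1}^r |\Fix(g_s,X_s)|$.

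First I would fix $k$ and partition the elements of $C$ according to the fixed-point counts of their coordinates: an element $(g_1,\dots,g_r)\in C_1\times\dots\times C_r$ has exactly $k$ fixed points in the product precisely when $\prod_{s=1}^r |\Fix(g_s,X_s)| = k$, i.e.\ when the tuple of fixed-point counts $(i_1,\dots,i_r)$ with $i_s=|\Fix(g_s,X_s)|$ satisfies $i_1\cdots i_r=k$. Since $C=C_1\times\dots\times C_r$ is a Cartesian product, the number of such elements is $\sum_{i_1\cdots i_r=k}\prod_{s=1}^r|\Fix_{i_s}(C_s)|$. Dividing by $|C|=\prod_s|C_s|$ and using $\delta_{i_s}(C_s)=|\Fix_{i_s}(C_s)|/|C_s|$ gives the stated formula for $\delta_k(C)$.

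For the case $k=0$, I would observe that $(g_1,\dots,g_r)$ is a derangement on the product if and only if $\prod_s|\Fix(g_s,X_s)|=0$, i.e.\ at least one $g_s$ is a derangement on $X_s$. By complementation, the proportion of non-derangements in $C$ is the proportion of tuples all of whose coordinates have at least one fixed point, which factorises as $\prod_{s=1}^r(1-\delta(C_s))$ since $C$ is a product; hence $\delta(C)=1-\prod_{s=1}^r(1-\delta(C_s))$. Alternatively this follows by setting $k=0$ in the general formula and noting $i_1\cdots i_r=0$ forces some $i_s=0$. I do not anticipate a genuine obstacle here; the only point requiring mild care is bookkeeping with the sum over ordered factorisations $i_1\cdots i_r=k$ (including the convention for $k=0$ and, when some $X_s$ is a single point, the degenerate factor $i_s=1$), and noting that the faithfulness reduction of Section~\ref{sec_actions} lets us treat each $G_i$ as a genuine subgroup of $\Sym(X_i)$ so that $\delta_{i_s}$ is well defined.
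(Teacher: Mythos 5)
Your proposal is correct and follows essentially the same route as the paper's proof: partition $C$ by the tuple of coordinate fixed-point counts, note that an element of $\Fix_{i_1}(C_1)\times\dots\times\Fix_{i_r}(C_r)$ has exactly $i_1\cdots i_r$ fixed points, and handle $k=0$ by complementation. No issues.
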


\begin{proof}
  Note that $C$ is partitioned into sets $D_{i_1,\dots,i_r}={\rm Fix}_{i_1}(C_1)\times\dots\times {\rm Fix}_{i_r}(C_r)$ for $i_1,\dots,i_r\geq 0$, and every element in  $D_{i_1,\dots,i_r}$ has exactly $i_1\cdots i_r$ fixed points. Thus we have
  \begin{align*}
    |{\rm Fix}_k(C)| &= \sum_{i_1\cdots i_r=k} \prod_{s=1}^r |{\rm Fix}_{i_s}(C_s)|
  \end{align*}
  and dividing by $|C|=\prod_{s=1}^r |C_i|$ yields the first claim.

  For $k=0$, note that  $(x_1,\dots,x_r)\in X_1\times\cdots\times X_r$ is fixed
 by $(c_1,\dots,c_r)\in C$ if and only if $x_ic_i=x_i$ for each $i\in[r]$. Thus, $1-\delta(C)=\prod_{i=1}^r(1-\delta(C_i))$, as claimed.
\end{proof}

We now give examples of subgroups all having similar
proportions of derangements. 

\begin{corollary}\label{cor_product2}
  Let $G_{m_1,\dots,m_r}\le \sym_{m_1}\times\dots\times \sym_{m_r}$ act via
  product action \Px\ where
  \[\alt_{m_1}\times\dots\times \alt_{m_r} \lhdeq G_{m_1,\dots,m_r}\leq \sym_{m_1}\times\dots\times \sym_{m_r}.\]
  For the multi-indexed sequence $(G_{m_1,\dots,m_r})$ of subgroups, we have
  \[\lim_{m_1,\dots,m_r\to\infty} \pder{G_{m_1,\dots,m_r}} =1-(1-e^{-1})^r.\]
\end{corollary}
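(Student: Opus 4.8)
The plan is to combine Theorem~\ref{thm_productNEW} (for $k=0$) with the limiting values of $\delta(G_{m_i})$ for single factors $G_{m_i}$ sandwiched between $\alt_{m_i}$ and $\sym_{m_i}$, and then take the limit inside the finite product. First I would observe that for each index $i$ the group $G_{m_i}$ appearing as the $i$-th factor is either $\alt_{m_i}$ or $\sym_{m_i}$ (since $\alt_{m_i}$ has index $2$ in $\sym_{m_i}$ and is the only proper nontrivial normal subgroup for $m_i\ge5$, while the small cases $m_i\le4$ are eventually irrelevant as $m_i\to\infty$). Actually, more carefully, since $G_{m_1,\dots,m_r}$ is squeezed between the direct product of the $\alt_{m_i}$ and the direct product of the $\sym_{m_i}$, its projection to the $i$-th coordinate lies between $\alt_{m_i}$ and $\sym_{m_i}$; but for the derangement count under \Px\ what matters is that Theorem~\ref{thm_productNEW} applies with $C_i = G_{m_1,\dots,m_r}$ itself --- here is the subtlety: $G_{m_1,\dots,m_r}$ need not be a direct product, so Theorem~\ref{thm_productNEW} does not literally apply to it. The fix is that for the proportion of derangements under the product action on $X_1\times\cdots\times X_r$, only the $i$-th projections of elements matter coordinate-by-coordinate for being a fixed point, and since $G_{m_1,\dots,m_r}$ contains $\alt_{m_1}\times\cdots\times\alt_{m_r}$, I would argue it acts on each $X_i$ with the same orbits; but to get the exact proportion I would instead invoke the direct product bound: $\delta(G)$ for $\alt_{m_1}\times\cdots\times\alt_{m_r}\le G\le\sym_{m_1}\times\cdots\times\sym_{m_r}$ is trapped between $\delta$ of the two extreme direct products.

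So the key steps are: (1) By Theorem~\ref{thm_productNEW} with each $C_i=G_i$ equal to $\alt_{m_i}$ or to $\sym_{m_i}$, we get $\delta(\alt_{m_1}\times\cdots\times\alt_{m_r})=1-\prod_{i=1}^r(1-\delta(\alt_{m_i}))$ and similarly with every $\sym$. (2) By Corollary~\ref{OLDAnSn} (the $k=0$ case, i.e.\ $\delta(\sym_n)=d_n$ and $\delta(\alt_n)=d_n+e_n$), both $\delta(\sym_{m_i})$ and $\delta(\alt_{m_i})$ tend to $e^{-1}$ as $m_i\to\infty$, since $e_n\to0$ and $d_n\to e^{-1}$. (3) Use the monotonicity/sandwich: for any $G$ with $\prod\alt_{m_i}\lhdeq G\le\prod\sym_{m_i}$ acting via \Px, one has $\delta(\prod\alt_{m_i})\le\delta(G)\le\delta(\prod\sym_{m_i})$ --- this I would justify by noting that an element is a derangement on $X_1\times\cdots\times X_r$ iff it is a derangement in \emph{some} coordinate, so $1-\delta(G)=|\{g\in G: g \text{ fixes a point in every } X_i\}|/|G|$, and this set is $G\cap\prod\Fix_{\ge1}(\sym_{m_i})$; comparing the fraction over $G$ with the fractions over the two extreme groups requires a short argument that the ratio $|H\cap N|/|H|$ for a fixed "pattern set" $N$ (a product of unions of point-stabiliser cosets, closed under the relevant structure) behaves monotonically --- alternatively and more cleanly, just bound $\delta(G)$ directly via Theorem~\ref{thm_power_new1}-style reasoning or note that $\alt_{m_i}\times\cdots$ being normal of index dividing $2^r$ lets one write $\delta(G)$ as an average of coset proportions each trapped in $[\delta_L,\delta_U]$ with $\delta_L,\delta_U\to e^{-1}$. (4) Take $m_1,\dots,m_r\to\infty$: both bounds converge to $1-(1-e^{-1})^r$, so by the squeeze theorem $\lim \delta(G_{m_1,\dots,m_r})=1-(1-e^{-1})^r$.

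I expect the main obstacle to be step (3): making precise why $\delta(G)$ for the non-direct-product group $G_{m_1,\dots,m_r}$ is sandwiched between the two direct-product values. The cleanest route is probably this: the number of non-derangements of $G$ under \Px\ equals $|G\cap S|$ where $S=\{(g_1,\dots,g_r): \text{each } g_i \text{ has a fixed point}\}$ is a union of "coset-like" pieces; since $\alt_{m_1}^{}\times\cdots\times\alt_{m_r}\le G$, write $G$ as a union of $\le2^r$ cosets of $N=\alt_{m_1}\times\cdots\times\alt_{m_r}$, and on each coset $Nc$ the proportion of non-derangements is $\prod_i(1-\delta(\alt_{m_i}c_i))$ by the coset version of Theorem~\ref{thm_productNEW} (which the paper proves for subsets $C=C_1\times\cdots\times C_r$, and a coset $Nc$ is exactly such a product of cosets $\alt_{m_i}c_i$). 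Each factor $\delta(\alt_{m_i}c_i)$ equals $\delta(\alt_{m_i})$ or $\delta(\sym_{m_i}\setminus\alt_{m_i})$, both $\to e^{-1}$ by Corollary~\ref{OLDAnSn}. Averaging over the cosets comprising $G$ gives $\delta(G)=\frac{1}{[G:N]}\sum_{Nc\subseteq G}\bigl(1-\prod_i(1-\delta(\alt_{m_i}c_i))\bigr)\to 1-(1-e^{-1})^r$ since every summand does. This avoids any monotonicity claim entirely and uses only the already-available subset formula in Theorem~\ref{thm_productNEW}, Corollary~\ref{OLDAnSn}, and the elementary fact that a finite average of sequences all converging to $L$ converges to $L$.
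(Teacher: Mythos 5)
Your final paragraph is exactly the paper's proof: decompose $G$ into cosets of $N=\alt_{m_1}\times\cdots\times\alt_{m_r}$, note each coset is a product $(\alt_{m_1}c_1)\times\cdots\times(\alt_{m_r}c_r)$ to which the subset version of Theorem~\ref{thm_productNEW} applies, use Corollary~\ref{OLDAnSn} to see each factor $\delta(\alt_{m_i}c_i)\to e^{-1}$, and average over the finitely many cosets. The earlier detours through monotonicity and sandwich arguments are unnecessary (and you correctly discard them), so the proposal is correct and follows essentially the same route as the paper.
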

\begin{proof}
  Note that $G=G_{m_1,\dots,m_r}$ is a union of cosets of
  $A=\alt_{m_1}\times \dots \times \alt_{m_r}$ of the
  form
  \[
  C=A(c_1,\dots,c_r)=(\alt_{m_1}c_1)\times \dots \times (\alt_{m_r}c_r).
  \]
  For each $i$, either $\alt_{m_i}c_i=\alt_{m_i}$ or
  $\alt_{m_i}c_i=\sym_{m_i}\setminus \alt_{m_i}$, so $\pder{\alt_{m_i}c_i}=d_{m_i}\pm e_{m_i}$
  by Corollary~\ref{OLDAnSn}. Since each $\pder{\alt_{m_i}c_i}\to
  e^{-1}$ for $m_i\to\infty$, Theorem~\ref{thm_productNEW} implies
  that $\pder{Ac}\to 1-(1-e^{-1})^r$ as $m_1,\dots,m_r\to \infty$.
  If $G$ is a disjoint union $G=\bigcup_{c\in\mathscr{C}}Ac$, so $|G|=|A||\mathcal{C}|$, then
  \[\pder{G} = \frac{1}{|G|}\sum_{c\in\mathscr{C}}|\Der(Ac)|=\frac{1}{|\mathscr{C}|} \sum_{c\in\mathscr{C}}\delta(Ac).\]
 As shown above, $\sum_{c\in\mathscr{C}}\delta(Ac)$ converges to
 $|\mathscr{C}|(1-(1-e^{-1})^r)$, which implies the claim.
\end{proof}

\subsection{Intransitive action}\label{sec_intrans}

Counting derangements with an intransitive action is straightforward.

\begin{theorem}\label{thm_intrans}
  For $r>0$ and $i\in[r]$ let $G_i\leq \Sym(X_i)$. Let the subgroup $G=G_1\times\dots\times G_r$ of $\Sym(X_1\,\dot{\cup}\,\cdots\,\dot{\cup}\,X_r)$ act via \Ix. 
  If $C\subseteq G$ {is a subset of the form
  $C=C_1\times\dots\times C_r$ with each $C_i\subseteq G_i$}, then
  \[
  \pderk{k}{C}=\sum_{k_1+\dots+k_r=k}\prod_{i=1}^r \pderk{k_i}{C_i}
  \qquad\text{and hence}\qquad
  \delta(C)=\prod_{i=1}^r \delta(C_i).
  \]
\end{theorem}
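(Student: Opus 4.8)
The plan is to mirror the argument for the product-action case (Theorem~\ref{thm_productNEW}) but with the combinatorics of disjoint unions replacing that of Cartesian products. The key observation is that when $G=G_1\times\dots\times G_r$ acts on $X_1\,\dot\cup\,\dots\,\dot\cup\,X_r$ via \Ix, an element $(c_1,\dots,c_r)\in C$ acts on each block $X_i$ exactly as $c_i$ does, and these blocks are disjoint. Hence a point $x\in X_i$ is fixed by $(c_1,\dots,c_r)$ if and only if it is fixed by $c_i$, and the total number of fixed points is simply the sum $k_1+\dots+k_r$ where $k_i$ is the number of fixed points of $c_i$ on $X_i$.

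First I would partition $C$ into the sets $E_{k_1,\dots,k_r}=\Fix_{k_1}(C_1)\times\dots\times\Fix_{k_r}(C_r)$ over all tuples $(k_1,\dots,k_r)$ with each $k_i\ge0$; by the observation above every element of $E_{k_1,\dots,k_r}$ has exactly $k_1+\dots+k_r$ fixed points on $X_1\,\dot\cup\,\dots\,\dot\cup\,X_r$. Summing cardinalities over those tuples with $k_1+\dots+k_r=k$ gives
\[
  |\Fix_k(C)| = \sum_{k_1+\dots+k_r=k}\ \prod_{i=1}^r |\Fix_{k_i}(C_i)|,
\]
and dividing by $|C|=\prod_{i=1}^r|C_i|$ yields the stated formula for $\pderk{k}{C}$. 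For the $k=0$ specialisation, note that the only tuple contributing is $k_1=\dots=k_r=0$, which gives $\pder{C}=\prod_{i=1}^r\pder{C_i}$ directly; alternatively one can argue that $(c_1,\dots,c_r)$ is a derangement on the disjoint union if and only if each $c_i$ is a derangement on $X_i$, so the events are ``independent'' across the factors and the proportions multiply.

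This argument is entirely routine --- there is no real obstacle, since the disjoint-union structure makes the fixed-point count additive rather than multiplicative, and the counting is a single application of the product rule for the partition classes. The only point requiring a line of care is noting that the actions on distinct $X_i$ genuinely do not interfere (a point of $X_i$ is never moved into $X_j$ for $j\neq i$), which is immediate from the definition of \Ix. One could also phrase the whole proof via the cycle index polynomial, using the fact that $\mathcal{Z}(G_1\times\dots\times G_r)=\prod_i\mathcal{Z}(G_i)$ for the intransitive action and then specialising variables, but the elementary partition argument is shorter and self-contained.
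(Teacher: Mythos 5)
Your proof is correct and follows essentially the same route as the paper: both observe that the fixed-point count of $(c_1,\dots,c_r)$ on the disjoint union is the sum of the fixed-point counts of the $c_i$ on the $X_i$, and both derive $|\Fix_k(C)|=\sum_{k_1+\dots+k_r=k}\prod_{i=1}^r|\Fix_{k_i}(C_i)|$ before dividing by $|C|$. The partition into the sets $E_{k_1,\dots,k_r}$ and the remark on the $k=0$ specialisation are just slightly more explicit versions of what the paper does.
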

 
\begin{proof}
  We have $(c_1,\dots,c_r)\in{\rm Fix}_k(C)$ if and only if
  for each $i\in[r]$, there exists a $k_i$ such that $c_i\in{\rm Fix}_{k_i}(C_i)$
  and
  $k=k_1+\cdots + k_r$. The result follows from
  \[|{\rm Fix}_k(C)|=\sum_{k_1+\dots+k_r=k}\prod_{i=1}^r |{\rm Fix}_{k_i}(C_i)|.\qedhere\]
\end{proof}

The following result gives subgroups with similar proportions of
derangements. We omit the proof as it similar to that of
Corollary \ref{cor_product2} but using Theorem~\ref{thm_intrans} instead
of Theorem~\ref{thm_productNEW}.

\begin{corollary}\label{cor_intrans}
  Let $G_{m_1,\dots,m_r}\le \sym_{m_1}\times\dots\times \sym_{m_r}$ act
  via intransitive action \Ix\ where
  \[
  \alt_{m_1}\times\dots\times \alt_{m_r} \lhdeq G_{m_1,\dots,m_r}
  \leq \sym_{m_1}\times\dots\times \sym_{m_r}.
  \]
  The multi-indexed sequence $(G_{m_1,\dots,m_r})$ satisfies
  $\lim_{m_1,\dots,m_r\to\infty} \pder{G_{m_1,\dots,m_r}} = e^{-r}$.
\end{corollary}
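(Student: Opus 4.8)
The plan is to mimic the proof of Corollary~\ref{cor_product2}, replacing the appeal to Theorem~\ref{thm_productNEW} by an appeal to Theorem~\ref{thm_intrans}. Write $A=\alt_{m_1}\times\dots\times\alt_{m_r}$, so that $G=G_{m_1,\dots,m_r}$ is a disjoint union $\bigcup_{c\in\mathscr{C}}Ac$ of cosets of $A$, and moreover each such coset factors as $Ac=(\alt_{m_1}c_1)\times\dots\times(\alt_{m_r}c_r)$ where for each $i$ we have either $\alt_{m_i}c_i=\alt_{m_i}$ or $\alt_{m_i}c_i=\sym_{m_i}\setminus\alt_{m_i}$. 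The reason this factorisation holds is that $A\lhdeq G$ forces every element of $G$ to normalise each factor $\alt_{m_i}$, hence to have $i$-th coordinate lying in $\sym_{m_i}$ (since $\alt_{m_i}$ is self-normalising in $\sym_{m_i}$ for $m_i\ge2$, with the usual small-case checks, or simply because $G\le\sym_{m_1}\times\dots\times\sym_{m_r}$ is given), and the coset of $\alt_{m_i}$ in $\sym_{m_i}$ is determined by the sign of that coordinate.

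First I would record, via Corollary~\ref{OLDAnSn}(b,c), that $\delta(\alt_{m_i}c_i)=d_{m_i}\pm e_{m_i}\to e^{-1}$ as $m_i\to\infty$, uniformly in the choice of coset (there are only two cosets and both tend to $e^{-1}$). Next, apply Theorem~\ref{thm_intrans} with each $C_i=\alt_{m_i}c_i$ to get $\delta(Ac)=\prod_{i=1}^r\delta(\alt_{m_i}c_i)$, so that $\delta(Ac)\to e^{-r}$ as $m_1,\dots,m_r\to\infty$, again uniformly over $c\in\mathscr{C}$. Finally, since $G$ is the disjoint union of the cosets $Ac$ with $c\in\mathscr{C}$ and $|G|=|A||\mathscr{C}|$, we have
\[
\delta(G)=\frac{1}{|G|}\sum_{c\in\mathscr{C}}|\Der(Ac)|
=\frac{1}{|\mathscr{C}|}\sum_{c\in\mathscr{C}}\delta(Ac),
\]
a convex combination of the numbers $\delta(Ac)$; since each term converges to $e^{-r}$ uniformly, so does the average, giving $\lim_{m_1,\dots,m_r\to\infty}\delta(G_{m_1,\dots,m_r})=e^{-r}$.

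There is no real obstacle here; the only point requiring a little care is the uniformity of the convergence over the (growing) index set $\mathscr{C}$, but this is immediate because each coordinate contributes one of only two possible factors $d_{m_i}\pm e_{m_i}$, each within $|e_{m_i}|$ of $e^{-1}$ up to the error in $d_{m_i}$, so $|\delta(Ac)-e^{-r}|$ is bounded by a quantity depending only on $m_1,\dots,m_r$ and not on $c$. Hence the whole argument is a routine adaptation of Corollary~\ref{cor_product2}, which is exactly why the authors omit it.
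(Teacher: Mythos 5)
Your proof is correct and is essentially the argument the paper intends: the authors explicitly omit the proof of Corollary~\ref{cor_intrans} because it is the proof of Corollary~\ref{cor_product2} with Theorem~\ref{thm_intrans} (giving $\delta(Ac)=\prod_i\delta(\alt_{m_i}c_i)$) in place of Theorem~\ref{thm_productNEW}, which is exactly what you have written. Your extra remark on uniformity over the coset representatives is a sensible clarification but does not change the route.
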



\section{Wreath products \texorpdfstring{$A\imprim B$}{} with imprimitive action}\label{sec_imprim}
\noindent In this section we find a formula for $\delta_k(A\wr_I B)$, which is the first formula
in Theorem~\ref{thm_formulas}, and we prove the
density result Theorem~\ref{thm_imprim1}.
 
  \begin{theorem}\label{thm_imprim_k}
    Let $A\leq \sym_m$ and $B\leq \sym_n$. Then
    \[
    \delta_k(A\wr_I B) = \sum_{\ell=0}^n \delta_\ell(B) \sum_{j_1+\dots+j_\ell=k} \prod_{r=1}^{\ell} \delta_{j_r}(A)
    \quad\textup{and hence}\quad
    \delta(A\wr_I B) = \sum_{\ell=0}^n \delta_\ell(B) \delta(A)^\ell.
    \]
  \end{theorem}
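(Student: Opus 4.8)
The plan is to decompose an element of $A\wr_I B = A^{[n]}\rtimes B$ as a pair $(\alpha,b)$ with $\alpha=(\alpha_1,\dots,\alpha_n)\in A^{[n]}$ and $b\in B$, and count its fixed points in $[m]\times[n]$ according to the cycle structure of $b$. Recall from \Iw\ that $(x,y)(\alpha,b) = (x\alpha(y),yb)$, so $(x,y)$ is fixed precisely when $yb=y$ and $x\alpha(y)=x$. Hence only pairs $(x,y)$ with $y$ a fixed point of $b$ can be fixed, and then $x$ must be a fixed point of $\alpha_y\in A$. Therefore, for a fixed $b$ with $\ell$ fixed points $\{y_1,\dots,y_\ell\}\subseteq[n]$, the number of fixed points of $(\alpha,b)$ is $\sum_{r=1}^\ell|\Fix(\alpha_{y_r})|$, where $\Fix(\cdot)$ here means fixed points in $[m]$; the coordinates $\alpha_y$ for $y\notin\Fix(b)$ are irrelevant.

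First I would fix $b\in B$ with exactly $\ell=\ell(b)$ fixed points and sum over $\alpha\in A^{[n]}$. The $n-\ell$ coordinates $\alpha_y$ with $yb\ne y$ range freely over $A$ and contribute a factor $|A|^{n-\ell}$. For the $\ell$ relevant coordinates, the element $(\alpha,b)$ lies in $\Fix_k(A\wr_I B)$ iff $\sum_{r=1}^\ell j_r = k$ where $j_r = |\Fix(\alpha_{y_r})|$; so the count of good $\ell$-tuples $(\alpha_{y_1},\dots,\alpha_{y_\ell})$ is $\sum_{j_1+\dots+j_\ell=k}\prod_{r=1}^\ell|\Fix_{j_r}(A)|$. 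Summing over all $b$ with $\ell(b)=\ell$ gives a factor $|\Fix_\ell(B)|$, and then summing over $\ell$ from $0$ to $n$ yields
\[
|\Fix_k(A\wr_I B)| = |A|^n \sum_{\ell=0}^n |\Fix_\ell(B)|\cdot |A|^{-\ell}\sum_{j_1+\dots+j_\ell=k}\prod_{r=1}^\ell|\Fix_{j_r}(A)|.
\]
Dividing by $|A\wr_I B| = |A|^n|B|$ converts $|\Fix_\ell(B)|/|B|$ into $\delta_\ell(B)$ and each $|\Fix_{j_r}(A)|/|A|$ into $\delta_{j_r}(A)$, giving the first formula. The $\ell=0$ term contributes only when $k=0$, consistent with the empty sum/product convention (giving $\delta_0(B)$ for $k=0$ and $0$ otherwise).

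The second formula is the specialisation $k=0$. Setting $k=0$ forces every $j_r=0$ in the inner sum (a sum of nonnegative integers vanishes iff all summands vanish), so the inner double sum collapses to the single term $\prod_{r=1}^\ell\delta_0(A) = \delta(A)^\ell$, yielding $\delta(A\wr_I B) = \sum_{\ell=0}^n\delta_\ell(B)\delta(A)^\ell$. I do not expect a genuine obstacle here; the one point requiring care is the bookkeeping of which $\alpha$-coordinates are free versus constrained, and making sure the empty-product and $\ell=0$ edge cases are handled correctly so that the stated identities hold verbatim including for $k=0$ and for small $\ell$. A secondary minor check is that the decomposition $g=(\alpha,b)$ is a genuine bijection $A\wr_I B \leftrightarrow A^{[n]}\times B$ compatible with the action formula \Iw, which is immediate from the definition of the wreath product.
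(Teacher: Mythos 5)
Your proposal is correct and follows essentially the same route as the paper: fix $b$ with $\ell$ fixed points, observe that $(x,y)$ is fixed iff $y$ is fixed by $b$ and $x$ by $\alpha(y)$, let the $n-\ell$ unconstrained coordinates contribute $|A|^{n-\ell}$, count the constrained tuples via the compositions $j_1+\dots+j_\ell=k$, and divide by $|A|^n|B|$. Your extra remarks on the $\ell=0$ and $k=0$ edge cases are consistent with the paper's (implicit) conventions, so nothing further is needed.
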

 \begin{proof} 
An element $(\alpha,b)\in A\wr_I B$ fixes $(x,y)\in[m]\times[n]$ if
and only if $x=x\alpha(y)$ and $y=yb$.  If $y_1,\dots,y_\ell$ are the
fixed points of $b$ (so $b\in {\rm Fix}_\ell(B)$) and
$x_{i,1},\ldots,x_{i,j_i}$ are the fixed points of $\alpha(y_i)$ (so
$\alpha(y_i)\in {\rm Fix}_{j_i}(A)$), then each $(x_{i,s},y_i)$ is a
fixed point of $(\alpha,b)$, and every fixed point of $(\alpha,b)$ has
this form. No constraints are imposed upon
$\alpha(y)$ for $y\in[n]\setminus\{j_1,\dots,j_\ell\}$.
The total number of fixed points is $k=j_1+\dots+j_\ell$, hence
$(\alpha,b)\in {\rm Fix}_{k}(A\wr_I B)$. This shows the following, and
dividing by $|A\wr_I B|=|A|^n|B|$ proves the claim:
\[
|{\rm Fix}_k(A\wr_I B)| = \sum_{\ell=0}^n |{\rm Fix}_\ell(B)| \sum_{j_1+\dots+j_\ell=k} |A|^{n-\ell}\prod_{r=1}^{\ell} |{\rm Fix}_{j_r}(A)|.\qedhere
\]
 \end{proof}

\begin{corollary}\label{C:Iw}
  If $A\le\sym_m$ and $B\le\sym_n$, then
  \[\delta_1(A\imprim B)=\delta_1(A)\mathcal{P}_B'(\delta(A))\]
  where $\mathcal{P}_B'(x)=\sum_{\ell=1}^n\ell\delta_\ell(B)x^{\ell-1}$
  is the derivative of $\mathcal{P}_B(x)$.
\end{corollary}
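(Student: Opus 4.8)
The plan is to derive Corollary~\ref{C:Iw} directly from Theorem~\ref{thm_imprim_k} by specialising $k=1$. First I would observe that when $k=1$, the inner sum $\sum_{j_1+\dots+j_\ell=1}\prod_{r=1}^\ell\delta_{j_r}(A)$ is supported on the compositions of $1$ into $\ell$ nonnegative parts: exactly one part equals $1$ and the remaining $\ell-1$ parts equal $0$. There are $\ell$ such compositions (choosing which index carries the $1$), and each contributes $\delta_1(A)\delta_0(A)^{\ell-1}=\delta_1(A)\delta(A)^{\ell-1}$. Hence the inner sum equals $\ell\,\delta_1(A)\delta(A)^{\ell-1}$, and the $\ell=0$ term vanishes (the empty sum over compositions of $1$ is $0$). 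Substituting into Theorem~\ref{thm_imprim_k} gives
\[
\delta_1(A\imprim B)=\sum_{\ell=1}^n\delta_\ell(B)\,\ell\,\delta_1(A)\delta(A)^{\ell-1}
=\delta_1(A)\sum_{\ell=1}^n\ell\,\delta_\ell(B)\,\delta(A)^{\ell-1}.
\]

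Next I would identify the remaining sum as a derivative. Since $\mathcal{P}_B(x)=\sum_{\ell=0}^n\delta_\ell(B)x^\ell$ is a polynomial, term-by-term differentiation yields $\mathcal{P}_B'(x)=\sum_{\ell=1}^n\ell\,\delta_\ell(B)x^{\ell-1}$, which is precisely the sum appearing above evaluated at $x=\delta(A)$. Therefore $\delta_1(A\imprim B)=\delta_1(A)\,\mathcal{P}_B'(\delta(A))$, as claimed. One could alternatively phrase the whole argument at the level of generating functions: the first identity of Theorem~\ref{thm_formulas} is the statement that the fixed-point probability generating function of $A\imprim B$ is $\mathcal{P}_B(\mathcal{P}_A(x))$, so extracting the coefficient of $x^1$ by the chain rule gives $\mathcal{P}_A'(0)\,\mathcal{P}_B'(\mathcal{P}_A(0))=\delta_1(A)\,\mathcal{P}_B'(\delta(A))$; but the direct combinatorial specialisation is cleaner and self-contained.

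There is essentially no obstacle here — the corollary is a routine specialisation — so the only point requiring a word of care is the bookkeeping on the inner sum (confirming that compositions of $1$ into $\ell$ nonnegative parts number exactly $\ell$, and that the $\ell=0$ summand drops out because there is no composition of $1$ into zero parts). I would state the two-line computation above and then invoke the polynomial nature of $\mathcal{P}_B$ to recognise $\mathcal{P}_B'$, and that completes the proof.
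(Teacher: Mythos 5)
Your proposal is correct and follows exactly the route the paper intends: the paper gives no separate proof of Corollary~\ref{C:Iw}, stating only that it follows by setting $k=1$ in Theorem~\ref{thm_formulas}, and your computation of the inner sum over compositions of $1$ into $\ell$ nonnegative parts (yielding $\ell\,\delta_1(A)\delta(A)^{\ell-1}$, with the $\ell=0$ term vanishing) is precisely the bookkeeping that specialisation requires. Nothing is missing.
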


The polynomial $\mathcal{P}_B(x)$
{can be difficult to compute precisely, but in some cases can be
easy to approximate. For example},
  when $B$ is transitive and $\delta(B)$ is small, then
  $\mathcal{P}_B(x)$ is convex  for $x\in[0,1]$, that is, cup shaped,
  and slightly larger than $x$ by the following~lemma. 
  
\begin{lemma}\label{L:convex}
  If $B\le\sym_n$, then $\mathcal{P}_B(x)$ is convex.
  If $B$ is transitive, then
    $0\le \mathcal{P}_B(x)- x\le \mathcal{P}_B(0)$ holds for $0\le x\leq 1$.
\end{lemma}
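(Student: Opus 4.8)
The plan is to read off both claims directly from the definition $\mathcal{P}_B(x)=\sum_{\ell=0}^n\delta_\ell(B)x^\ell$ together with the elementary fact that $\delta_\ell(B)\ge 0$ and $\sum_{\ell=0}^n\delta_\ell(B)=1$ (since the $\Fix_\ell(B)$ partition $B$). Convexity is immediate: $\mathcal{P}_B''(x)=\sum_{\ell=2}^n\ell(\ell-1)\delta_\ell(B)x^{\ell-2}\ge 0$ for $x\ge 0$ (and in particular on $[0,1]$), because every coefficient is nonnegative. So the first sentence needs only one line.

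For the second sentence assume $B$ is transitive. The key structural input is that transitivity forces $\delta_0(B)+\delta_1(B)<1$ in a useful quantitative form; more precisely $\delta_1(B)=$ (number of fixed points averaged over $B$) $\cdot\, 1$, and by the Orbit-Counting Lemma the average number of fixed points of a transitive group is $1$, i.e. $\sum_{\ell\ge 1}\ell\,\delta_\ell(B)=1$. This gives $\mathcal{P}_B'(1)=\sum_{\ell\ge1}\ell\,\delta_\ell(B)=1$. Now consider $g(x)=\mathcal{P}_B(x)-x$ on $[0,1]$. We have $g'(x)=\mathcal{P}_B'(x)-1$, which is nondecreasing (as $\mathcal{P}_B'$ is nondecreasing on $[0,1]$ since $\mathcal{P}_B''\ge 0$), and $g'(1)=\mathcal{P}_B'(1)-1=0$. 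Hence $g'(x)\le 0$ on $[0,1]$, so $g$ is nonincreasing on $[0,1]$; therefore $g(1)\le g(x)\le g(0)$ for $x\in[0,1]$. Now $g(1)=\mathcal{P}_B(1)-1=0$ (total probability) and $g(0)=\mathcal{P}_B(0)-0=\mathcal{P}_B(0)=\delta_0(B)$. This yields exactly $0\le\mathcal{P}_B(x)-x\le\mathcal{P}_B(0)$ for $0\le x\le 1$.

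The only mild subtlety — and the step I would be most careful about — is the identity $\mathcal{P}_B'(1)=1$ for transitive $B$: it is exactly the statement that a transitive permutation group has, on average, one fixed point, which is the Orbit-Counting (Burnside) Lemma applied to $B\le\sym_n$. I would cite this explicitly (it is already invoked in Section~\ref{sec_background} and in the Remark after Corollary~\ref{OLDAnSn}) rather than reprove it. Everything else is just nonnegativity of coefficients plus the monotonicity argument for $g$; no serious obstacle arises, and the proof is a few lines. One should note the hypothesis $m,n\ge 2$ is not needed here, only that $B$ is transitive (so $n\ge 1$), and that for $n=1$ the group $B$ is trivial, $\mathcal{P}_B(x)=x$, and both inequalities hold with equality.
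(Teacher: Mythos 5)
Your proof is correct, and its overall shape matches the paper's: both arguments study $f(x)=\mathcal{P}_B(x)-x$, establish $f(1)=f'(1)=0$ together with $f'(x)\le 0$ on $[0,1]$, and conclude that $f$ is nonincreasing there, so $0=f(1)\le f(x)\le f(0)=\mathcal{P}_B(0)$. The one genuine point of divergence is how the key facts $\mathcal{P}_B'(1)=1$ and $\mathcal{P}_B'(x)\le 1$ on $[0,1]$ are obtained for transitive $B$. The paper invokes the identity $\mathcal{P}_B'(x)=\mathcal{P}_H(x)$, where $H$ is a point stabiliser of $B$ acting on $[n-1]$ (Boston et al., Theorem~4.6(4)); since $\mathcal{P}_H$ is a probability generating function, both facts drop out at once. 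You instead compute $\mathcal{P}_B'(1)=\sum_{\ell\ge 1}\ell\,\delta_\ell(B)=1$ directly from the Orbit-Counting Lemma (average number of fixed points of a transitive group is $1$) and then deduce $\mathcal{P}_B'(x)\le\mathcal{P}_B'(1)=1$ from the monotonicity of $\mathcal{P}_B'$, which follows from $\mathcal{P}_B''\ge 0$. Your route is marginally more self-contained, replacing the external citation with Burnside plus the nonnegativity of the coefficients already used for convexity; the paper's route packages the same information in the stabiliser identity, which it also reuses elsewhere. Either way the argument is complete, and your closing remarks about the $n=1$ case and the irrelevance of $m,n\ge 2$ are accurate.
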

\begin{proof}
  The claim is trivially true if $n=1$, so let $n\geq 2$ and write $f(x)=\mathcal{P}_B(x)-x$. Since $\mathcal{P}_B(x)$ has non-negative coefficients and highest term $x^n/|B|$,  the second  derivative satisfies $f''(x)=\mathcal{P}''_B(x)>0$ for $0<x\leq 1$, so both $f(x)$ and $\mathcal{P}_B(x)$ are convex on $[0,1]$. By  \cite[Theorem~4.6(4)]{Boston}, we have $\mathcal{P}'_B(x)=\mathcal{P}_H(x)$ where $H\leq \sym_{n-1}$ is the stabiliser of $n$ in $B$ acting on $[n-1]$. This implies $\mathcal{P}'_B(1)=1$ and $\mathcal{P}'_B(x)\leq 1$ for $x\in[0,1]$. Thus, $f(1)=f'(1)=0$, and convexity implies that $f(x)$ lies above the tangent line $y=0$ at $(1,0)$, that is, $0\leq f(x)$ on the domain $[0,1]$. Since $f'(x)=\mathcal{P}_B'(x)-1\leq 0$ on $[0,1]$, the function $f(x)$ is decreasing on $[0,1]$. This proves the last claim $f(x)\leq f(0)=\mathcal{P}_B(0)$.
\end{proof}

We now prove Theorem \ref{thm_imprim1}, employing a similar argument as in \cite[Theorem 4.13]{Boston}. 

\begin{proof}[Proof of Theorem \ref{thm_imprim1}]
  Fix $B\le\sym_n$. The map $\mathcal{P}_B\colon [0,1] \to (0,\infty)\colon
  z\mapsto\sum_{k=0}^n\delta_k(B)z^k$ satisfies
  $\delta(A\wr_I B)=\mathcal{P}_B(\delta(A))$ by Theorem~\ref{boston}(a).
  Note that $\mathcal{P}_B$ is a continuous increasing polynomial function
  with $\mathcal{P}_B(0)=\delta(B)$ and $\mathcal{P}_B(1)=1$, so
  $\delta(B)\leq\delta(A\power B)<1$
  for all  $A\le\sym_m$. Letting $m\ge1$ and $A$
  vary over the  primitive subgroups of $\sym_m$,
  it follows from Theorem~\ref{boston}(c) that the
  values of $\delta(A\power B)$ are dense in the interval $[\delta(B),1]$.
  This proves the first claim.

  For the second claim, fix $A\leq \sym_m$.
  If $B\leq \sym_n$ is imprimitive, then $B$ is transitive and $\delta(A\imprim B)=\mathcal{P}_B(\delta(A))\ge\delta(A)$
  by Lemma~\ref{L:convex}, so the values of $\delta(A\imprim B)$ (with $B$ imprimitive) lie
  in $[\delta(A),1]$.
  Let $q$ be a prime power,
  and let $C_q=\textup{AGL}_1(q)\le\sym_q$ act naturally on $q$ affine points;
  we also abbreviate $C=C_q$.   An easy calculation shows that
  {\[\mathcal{P}_C(x)=\frac{1}{q}+\frac{(q-2)x}{q-1}+\frac{x^q}{q(q-1)},\] see~\cite[Example~4.4]{Boston}}.
  Let $B_r=B_{r,q}$ be the  imprimitive wreath
  product $C\imprim\cdots\imprim C$ of  $r$ copies of $C$.
  We show that the  closure of the set
  $\{\delta(A\imprim B_{r,q})\mid \textup{$r\ge0$, $q$ prime power}\}$
  is the interval $[\delta(A),1]$.
  
  The recurrence $\mathcal{P}_{B_r}(x)=\mathcal{P}_{B_{r-1}}(\mathcal{P}_C(x))$
  holds by~\cite[Theorem~4.6(8)]{Boston} since $B_r=C\imprim B_{r-1}$.
  The function $\mathcal{P}_{B_{r-1}}(x)$
  is  increasing, and since $\mathcal{P}_C(x)>\alpha+\beta x$ for $\alpha=\frac{1}{q}$
  and $\beta=\frac{q-2}{q-1}$, we have
  $\mathcal{P}_{B_r}(x)> \mathcal{P}_{B_{r-1}}(\alpha+\beta x)$. An induction
  on $r$ now shows that 
  $\mathcal{P}_{B_r}(x)> \alpha(1+\beta+\cdots+\beta^{r-1})+\beta^r x$, and therefore $\mathcal{P}_{B_r}(x)>\alpha\frac{\beta^r-1}{\beta-1}
  =(1-\frac{1}{q})(1-\beta^r)$ for all $x\in[0,1]$.
  Now choose $q>1/\varepsilon$, so that
  $1-\frac{1}{q}>1-\varepsilon$, and then choose $r$ such that
  $(1-\frac{1}{q})(1-\beta^r)>1-\varepsilon$.

  Let $c_r=\delta(A\imprim B_r)$.
  It follows from Theorem~\ref{boston}(a) that
  $\delta(A\imprim B_r)=\mathcal{P}_{B_r}(\delta(A))$ and
  the previous paragraph shows that  $c_r>1-\varepsilon$.
    As $C$ is transitive, $0\le\mathcal{P}_C(x)-x\le\mathcal{P}_C(0)$ for
    $x\in [0,1]$ by Lemma~\ref{L:convex},
  and so $0\le \mathcal{P}_C(x)-x\le\frac{1}{q}<\varepsilon$.
  Set $c_0=\delta(A)$ and $c_i=\mathcal{P}_C(c_{i-1})$. Then
  $0\le c_i-c_{i-1}<\varepsilon$ holds for
  $1\le i\le r$ and $1-\varepsilon<c_r<1$. The claim now follows since
  each $c_i=\delta(A\wr_I C\wr_I\cdots \wr_I C)=\delta(A\wr_I B_{i,q})$
  with $B_{i,q}$ imprimitive.
\end{proof}

In the special case that $B=\sym_m$, we obtain the following explicit formula.

\begin{corollary}
{Setting $B=\sym_n$} in Theorem~\ref{thm_imprim_k} gives
  \[
  \delta(A \imprim \sym_n)
  = \sum_{\ell=0}^n \frac{\delta(A)^\ell}{\ell!} \sum_{i=0}^{n-\ell} \frac{(-1)^i}{i!}
  = \sum_{i=0}^n \frac{(-1)^i}{i!} \sum_{\ell=0}^{n-i} \frac{\delta(A)^\ell}{\ell!}.
  \]
  Hence $\lim_{n\to\infty} \pder{A \imprim \sym_n} = e^{\delta(A)-1}$, and
  so  $\lim_{n,m\to\infty} \pder{\sym_m \imprim \sym_n} = e^{e^{-1}-1}$.
\end{corollary}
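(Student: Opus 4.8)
The plan is to start from the formula of Theorem~\ref{thm_imprim_k} with $B=\sym_n$. By Corollary~\ref{OLDAnSn}(a) we have $\delta_\ell(\sym_n)=d_{n-\ell}/\ell!$ where $d_j=\sum_{i=0}^j (-1)^i/i!$; substituting this into $\delta(A\imprim\sym_n)=\sum_{\ell=0}^n \delta_\ell(\sym_n)\,\delta(A)^\ell$ gives the single sum $\sum_{\ell=0}^n \frac{\delta(A)^\ell}{\ell!} d_{n-\ell}$, which is exactly the first displayed expression once $d_{n-\ell}$ is written out. (Equivalently one can cite Theorem~\ref{boston}(a), $\delta(A\imprim\sym_n)=\mathcal{P}_{\sym_n}(\delta(A))$, and use the known $\mathcal{P}_{\sym_n}(x)=\sum_{\ell=0}^n x^\ell d_{n-\ell}/\ell!$.) The second equality is just a change of order of summation: write the double sum $\sum_{\ell=0}^n\sum_{i=0}^{n-\ell}$ with summand $\frac{(-1)^i}{i!}\cdot\frac{\delta(A)^\ell}{\ell!}$ and swap to $\sum_{i=0}^n\sum_{\ell=0}^{n-i}$; since the index set $\{(\ell,i): \ell,i\ge0,\ \ell+i\le n\}$ is symmetric in this respect, the two iterated sums are equal.

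For the limit statement, I would observe that as $n\to\infty$ the inner partial sum $\sum_{\ell=0}^{n-i}\delta(A)^\ell/\ell!$ converges to $e^{\delta(A)}$ for each fixed $i$, while $\sum_{i=0}^{n}(-1)^i/i!\to e^{-1}$; one needs a clean justification that the product of the two truncated series converges to the product of the limits. The cleanest route is to use the absolutely convergent Cauchy-product identity $e^{\delta(A)-1}=\sum_{k=0}^{\infty}\frac{1}{k!}\sum_{i=0}^{k}\binom{k}{i}\delta(A)^{k-i}(-1)^i$, but it is simpler still to bound the tail directly: the difference between $\delta(A\imprim\sym_n)$ and the corresponding product of full exponential series is a sum of terms $\frac{(-1)^i}{i!}\cdot\frac{\delta(A)^\ell}{\ell!}$ over pairs with $\ell+i>n$, whose absolute values are dominated by $\sum_{\ell+i>n}\frac{1^i}{i!}\cdot\frac{1^\ell}{\ell!}$, a tail of the convergent double series $\sum_{\ell,i\ge0}\frac{1}{\ell! i!}=e^2$, hence $\to0$. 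Since $\delta(A)\in[0,1]$ this bound is uniform enough, so $\lim_{n\to\infty}\delta(A\imprim\sym_n)=e^{\delta(A)}e^{-1}=e^{\delta(A)-1}$.

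Finally, for the iterated limit $\lim_{n,m\to\infty}\delta(\sym_m\imprim\sym_n)=e^{e^{-1}-1}$, I would set $A=\sym_m$ and use continuity: $\delta(\sym_m)=d_m\to e^{-1}$ as $m\to\infty$ by Corollary~\ref{OLDAnSn}, and the map $x\mapsto e^{x-1}$ is continuous, so $e^{\delta(\sym_m)-1}\to e^{e^{-1}-1}$. To get a genuine double limit rather than an iterated one, I would note that the error estimate in the previous paragraph is uniform in $A$ (it only used $\delta(A)\le1$), so $\delta(\sym_m\imprim\sym_n)=e^{\delta(\sym_m)-1}+O(\varepsilon_n)$ with $\varepsilon_n\to0$ independent of $m$, and then let both $m,n\to\infty$.

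I expect the only real subtlety to be the interchange/tail argument underpinning the limit: everything else is a mechanical rearrangement of a finite double sum and a substitution of the known value $\delta_\ell(\sym_n)=d_{n-\ell}/\ell!$. A careful statement that the truncated double series $\sum_{\ell+i\le n}\frac{(-1)^i}{i!}\frac{\delta(A)^\ell}{\ell!}$ converges to $\big(\sum_i\frac{(-1)^i}{i!}\big)\big(\sum_\ell\frac{\delta(A)^\ell}{\ell!}\big)$ — justified by absolute convergence of $\sum_{\ell,i}\frac{1}{\ell! i!}$ — is the crux, and it also automatically delivers uniformity in $\delta(A)\in[0,1]$ needed for the double-limit claim.
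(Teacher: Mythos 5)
Your proposal is correct and follows essentially the same route as the paper: substitute $\delta_\ell(\sym_n)=d_{n-\ell}/\ell!$ from Corollary~\ref{OLDAnSn} into Theorem~\ref{thm_imprim_k}, interchange the finite double sum, and pass to the limit. The paper simply asserts that the limits "now follow"; your tail bound via the convergent double series $\sum_{\ell,i}\frac{1}{\ell!\,i!}$ and the resulting uniformity in $\delta(A)\in[0,1]$ is a correct and welcome filling-in of that omitted detail.
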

\begin{proof}
  The first displayed formula follows from Theorem~\ref{thm_imprim_k} since
  $|\delta_\ell(\sym_n)| = d_{n-\ell}/\ell!$ by Corollary~\ref{OLDAnSn}. Interchanging
  summations gives the second formula. The first limit now follows,
  and the second follows from $\lim_{m\to\infty}\delta(\sym_m)=e^{-1}$ by Corollary~\ref{OLDAnSn}.
\end{proof}

\section{Wreath products \texorpdfstring{$A\power B$}{} with power action \texorpdfstring{\Pw}{}}\label{sec:power}
\noindent Given $A\leq \sym_m$ and $B\leq \sym_n$, we study $A\power B$ acting with
power action on the set $\Omega=[m]^{[n]}$. Recall that the elements of
$A\power B$ are $(\alpha,b)$ with $\alpha\in A^{[n]}$ and $b\in  B$, and that we  sometimes write $\alpha\in A^{[n]}$ and $\omega\in \Omega$ as
$\alpha=(\alpha_1,\dots,\alpha_n)$ and $\omega=(\omega_1,\dots,\omega_n)$,
respectively. Using \Pw, the element $(\alpha,b)\in A\power B$ acts on
$\omega\in\Omega$ via
\[
\omega{(\alpha,b)}\colon [n]\to [m],\quad  y\mapsto \omega(yb^{-1}){\alpha(y{b^{-1}})}.
\] 

We start with a brief discussion of the numbers $\stir{G}{\ell}$ defined in {Equation \eqref{eq_stir}.}  If $G=\sym_n$, then $\stir{G}{\ell}=\stir{n}{\ell}$ is the  
(unsigned) Stirling number $\stir{n}{\ell}$ of the first kind, see
\cite[Section 6.1]{GKP89} for details. It is easy to see that for
$n\ge1$ and $G\leq \sym_n$ we have
\[
\stir{n}{1}=(n-1)!,\quad\stir{n}{n-1}=\binom{n}{2},
\quad \stir{n}{n}=1\quad\text{and}\quad
\sum_{\ell=0}^n\stir{G}{\ell}=|G|.
\]

\begin{example}\label{example:stir}
  If $C_n=\langle(1,2,\dots,n)\rangle\le\sym_n$, then 
  $\stir{C_n}{d}=\phi(n/d)$ if $d\mid n$, and $0$ otherwise, where
  $\phi$ denotes Euler's $\phi$-function, see \cite[Lemma 5.6]{Boston}: 
  The disjoint cycle decomposition of $(1,2,\dots,n)^k$ consists of $n/d$
  cycles each of length $d=\gcd(n,k)$. Such an element has order $n/d$, and there are $\phi(n/d)$ such elements in $B$.
   In particular, $\sum_{d\mid n}\phi(n/d)=n$.    
\end{example}

For $\alt_n$, we observe an alternating behaviour:
$\stir{\alt_n}{\ell}=0$ if $\ell\not\equiv n\bmod 2$,
and $\stir{\alt_n}{\ell}=\stir{n}{\ell}$ otherwise.
This follows from the next lemma.

\begin{lemma}
  If $G\leq \sym_n$, then $\stir{G}{\ell}=\stir{G\,\cap\, \alt_n}{\ell}$
  if $\ell\equiv n\bmod 2$, and
  $\stir{G}{\ell}=\stir{G\,\cap\, (\sym_n\setminus \alt_n)}{\ell}$ otherwise.
\end{lemma}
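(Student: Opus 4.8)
The statement relates the number of permutations in a subgroup $G\le\sym_n$ with exactly $\ell$ cycles to the number of such permutations lying in a specified coset of $\alt_n$, according to the parity of $\ell-n$. The plan is to use the fundamental fact that for a permutation $g\in\sym_n$ with $c_n(g)=\ell$ cycles, the sign of $g$ is determined by $\ell$: indeed a single $d$-cycle is an even permutation if $d$ is odd and odd if $d$ is even, so writing $g$ as a product of its cycles one gets $\mathrm{sgn}(g)=(-1)^{n-\ell}$, since $g$ acting on $[n]$ with $\ell$ cycles is a product of $n-\ell$ transpositions. Hence $g\in\alt_n$ if and only if $n-\ell$ is even, i.e. $\ell\equiv n\bmod 2$; and $g\in\sym_n\setminus\alt_n$ if and only if $\ell\not\equiv n\bmod 2$.

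First I would record the sign computation $\mathrm{sgn}(g)=(-1)^{n-c_n(g)}$ for all $g\in\sym_n$. Then, fixing $\ell$ and setting $S=\{g\in G : c_n(g)=\ell\}$, every element of $S$ has the same sign $(-1)^{n-\ell}$, so $S$ is entirely contained in $\alt_n$ when $\ell\equiv n\bmod 2$ and entirely contained in $\sym_n\setminus\alt_n$ otherwise. In the first case $S\subseteq G\cap\alt_n$, and since trivially $S\supseteq\{g\in G\cap\alt_n: c_n(g)=\ell\}$, we get $|S|=\stir{G\cap\alt_n}{\ell}$, i.e. $\stir{G}{\ell}=\stir{G\cap\alt_n}{\ell}$. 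In the second case the same reasoning with $\sym_n\setminus\alt_n$ in place of $\alt_n$ gives $\stir{G}{\ell}=\stir{G\cap(\sym_n\setminus\alt_n)}{\ell}$.

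There is really no main obstacle here: the only thing to be careful about is the sign identity, and in particular that $\stir{G\cap\alt_n}{\ell}$ is defined as the number of elements of the set $G\cap\alt_n$ (not necessarily a group if $G\not\le\alt_n$, but a well-defined subset) having $\ell$ cycles in $\sym_n$, which is exactly what the argument produces. One should also note the degenerate consistency check: summing over all $\ell$ recovers $|G|=|G\cap\alt_n|+|G\cap(\sym_n\setminus\alt_n)|$, and for $\alt_n$ itself (taking $G=\alt_n$) one obtains $\stir{\alt_n}{\ell}=\stir{\sym_n}{\ell}=\stir{n}{\ell}$ when $\ell\equiv n\bmod 2$ and $\stir{\alt_n}{\ell}=0$ otherwise, matching the remark preceding the lemma.
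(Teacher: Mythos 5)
Your proof is correct and rests on the same key fact as the paper's: every $g\in\sym_n$ with $\ell$ cycles has sign $(-1)^{n-\ell}$, so all elements counted by $\stir{G}{\ell}$ lie in $\alt_n$ or in $\sym_n\setminus\alt_n$ according to the parity of $n-\ell$. The paper verifies this sign fact by a slightly more hands-on parity count of the even-length cycles, whereas you invoke the standard transposition-count formula $\mathrm{sgn}(g)=(-1)^{n-c_n(g)}$ directly, but the argument is essentially the same.
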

\begin{proof}
  Let $g\in G$ have cycle decomposition $g=g_1\cdots g_\ell$. Note that $|g_1|+\dots+|g_\ell|=n$, and $g_i$ has sign $-1$ if and only if $|g_i|$ is even. We can assume $g_1,\dots,g_s$ have sign $-1$ and $g_{s+1},\dots,g_\ell$ have sign~$1$, so $g\in \alt_n$ if and only if $s$ is even. The cycles $g_1,\dots,g_s$ involve an even number $e=|g_1|+\dots+|g_s|$ of points, and the remaining $\ell-s$ cycles involve  $n-e$ points. Each of $g_{s+1},\dots,g_\ell$ has odd length, which forces  $n-e\equiv \ell-s\bmod 2$. Thus, if $n\equiv \ell\bmod 2$, then $s\equiv e\equiv 0\bmod 2$, and $g\in \alt_n$. If $n\not\equiv \ell\bmod 2$, then $s\not\equiv e\equiv 0\bmod 2$, and $g\in \sym_n\setminus \alt_n$. The claim now follows.
\end{proof}
  
In particular, if $G\leq \sym_n$ but $\alt_n\not\leq G$, then $H=G\cap \alt_n$ has index $2$ in $G$, which implies that half of all elements lie in $H$ (and these elements all have cycle number congruent to $n$ modulo $2$) and the other half of elements lie in $G\setminus H$ (and {these elements all} have cycle number congruent to $n+1$ modulo $2$). This is summarised in the following lemma.
\begin{lemma}
  If $G\le\sym_n$ and $\alt_n\not\le G$, then
  $\displaystyle
\sum_{i=1}^{\lfloor n/2\rfloor} \stir{G}{2i}= \frac{|G|}{2} = \sum_{i=1}^{\lceil n/2\rceil} \stir{G}{2i-1}$.
\end{lemma}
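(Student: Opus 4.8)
The statement to prove is: if $G\le\sym_n$ and $\alt_n\not\le G$, then
\[
\sum_{i=1}^{\lfloor n/2\rfloor} \stir{G}{2i}= \frac{|G|}{2} = \sum_{i=1}^{\lceil n/2\rceil} \stir{G}{2i-1}.
\]
The plan is to deduce this immediately from the preceding lemma together with the paragraph of discussion that follows it. First I would set $H=G\cap\alt_n$. Since $\alt_n\not\le G$, the subgroup $H$ is a \emph{proper} subgroup of $G$; moreover $H=\ker(\operatorname{sgn}|_G)$ is the kernel of the (necessarily surjective, since $G\not\le\alt_n$) sign homomorphism $G\to\{\pm1\}$, so $[G:H]=2$ and hence $|H|=|G|/2$ and $|G\setminus H|=|G|/2$ as well.

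Next I would invoke the previous lemma in both of its cases. Partition the cycle-number parameter $\ell$ by parity. The elements of $G$ with $c_n(g)\equiv n\bmod 2$ are counted by $\sum_\ell \stir{G}{\ell}$ over $\ell\equiv n\bmod 2$, and by the lemma each such $\stir{G}{\ell}$ equals $\stir{H}{\ell}$; summing over all $\ell$ of that parity and using $\sum_{\ell}\stir{H}{\ell}=|H|$ gives that the total is $|H|=|G|/2$. Symmetrically, the elements with $c_n(g)\equiv n+1\bmod 2$ are counted by $\sum_{\ell\not\equiv n\bmod 2}\stir{G}{\ell}=\sum_{\ell\not\equiv n\bmod 2}\stir{G\cap(\sym_n\setminus\alt_n)}{\ell}$, and since every $g\in G\setminus H$ does have $c_n(g)$ of that parity (this is exactly the content of the lemma read the other way) this sum is $|G\setminus H|=|G|/2$. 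Finally I would translate ``$\ell\equiv n\bmod 2$'' versus ``$\ell\not\equiv n\bmod 2$'' into the even/odd index notation of the statement: if $n$ is even the first sum is over even $\ell=2i$ with $1\le i\le\lfloor n/2\rfloor$ and the second over odd $\ell=2i-1$ with $1\le i\le\lceil n/2\rceil$; if $n$ is odd the roles swap, but the two displayed sums remain $\sum_{2i\le n}\stir{G}{2i}$ and $\sum_{2i-1\le n}\stir{G}{2i-1}$ regardless, and each equals $|G|/2$. One small bookkeeping point to check is that the index ranges $\lfloor n/2\rfloor$ and $\lceil n/2\rceil$ correctly capture ``all even $\ell$ in $[1,n]$'' and ``all odd $\ell$ in $[1,n]$'' in both parities of $n$, and that $\stir{G}{\ell}=0$ for $\ell=0$ and for $\ell>n$ so no terms are lost.

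There is essentially no obstacle here: the lemma does all the work, and the only thing to be careful about is matching the parity-of-$\ell$ description with the even/odd-index description of the two sums, which is purely notational. The argument is short enough that in the paper it would be phrased as ``this follows from the preceding lemma and the subsequent remark,'' but the skeleton above records the two ingredients ($[G:H]=2$, and the two cases of the lemma together with $\sum_\ell\stir{H}{\ell}=|H|$) that make it go through.
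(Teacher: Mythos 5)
Your proposal is correct and follows essentially the same route as the paper, which states this lemma as a summary of the immediately preceding paragraph: $H=G\cap\alt_n$ has index $2$ in $G$, the elements of $H$ are exactly those with cycle number $\equiv n\bmod 2$, and the elements of $G\setminus H$ are those with cycle number $\equiv n+1\bmod 2$, each half contributing $|G|/2$. Your extra care with the index ranges $\lfloor n/2\rfloor$ and $\lceil n/2\rceil$ is sound and the paper leaves that bookkeeping implicit.
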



\subsection{Formula for \texorpdfstring{$\delta_k(A\power B)$}{} in
  Theorem~\ref{thm_formulas}}\label{sec_power}
We start with a definition.

\begin{definition}\label{def:biproduct}
  Let $b\in \sym_n$ have disjoint cycle decomposition $b=b_1\cdots b_\ell$
  (including trivial cycles) and let $b_i=(y_i,y_ib,\dots,y_ib^{k_i-1})$ be
  a $k_i$-cycle whose smallest element is $y_i$.
  If $\alpha \in A^{[n]}$, then the \emph{$b_i$-product} of $\alpha$ is
  $b_i(\alpha)=\alpha({y_i})\alpha(y_ib)\cdots \alpha(y_ib^{k_i-1})\in A$.
\end{definition}

  \begin{theorem}\label{thm_power}
    Let $G=A\power B\leq \sym_{m^n}$ where  $A\leq \sym_m$ and $B\leq \sym_n$.
\begin{enumerate}[\rm (a)]
\item  Let $b\in B$ with disjoint cycle decomposition
  $b=b_1\cdots b_\ell$. Then
  \[
  \delta_k(A^{[n]}b) = \sum_{j_1\cdots j_\ell=k} \prod_{r=1}^\ell \delta_{j_r}(A);
  \]
  if $k=0$, then $\delta(A^{[n]}b)=1-(1-\delta(A))^\ell$.
  \item We have
    \[\delta_k(A\power B) = \frac{1}{|B|}\sum_{\ell=1}^n\stir{B}{\ell} \sum_{j_1\cdots j_\ell=k} \prod_{r=1}^\ell \delta_{j_r}(A);\]
    if $k=0$, then \[\delta(A\power B)=1-\frac{1}{|B|}\sum_{\ell=1}^n \stir{B}{\ell}(1-\delta(A))^\ell.\]
\end{enumerate}
  \end{theorem}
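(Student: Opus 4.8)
The plan is to reduce everything to a calculation about the action of a single element $(\alpha,b)$, and then average over $b\in B$.

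\medskip

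\textbf{Step 1: Analyse fixed points of $(\alpha,b)$ cycle-by-cycle.}
First I would fix $b\in B$ with disjoint cycle decomposition $b=b_1\cdots b_\ell$ (including trivial cycles), and ask when $\omega=(\omega_1,\dots,\omega_n)\in[m]^{[n]}$ is fixed by $(\alpha,b)$. Using \Pw, $\omega(\alpha,b)=\omega$ means $\omega(y)=\omega(yb^{-1})\alpha(yb^{-1})$ for all $y\in[n]$, equivalently $\omega(yb)=\omega(y)\alpha(y)$ for all $y$. Iterating along a cycle $b_i=(y_i,y_ib,\dots,y_ib^{k_i-1})$ shows that once $\omega(y_i)$ is chosen, the coordinates $\omega(y_ib^t)$ are determined for $t=1,\dots,k_i-1$, and consistency around the cycle forces $\omega(y_i)=\omega(y_i)\cdot b_i(\alpha)$, i.e.\ $\omega(y_i)$ must be a fixed point in $[m]$ of the $b_i$-product $b_i(\alpha)\in A$. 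So the number of fixed $\omega$ supported on the cycle $b_i$ is exactly $|\Fix(b_i(\alpha),[m])|$, and $\omega$ is fixed iff each cycle contributes such a choice independently; moreover the number of fixed points of $\omega$ lying ``on cycle $b_i$'' is the number of $t$ with $\omega(y_ib^t)$ in a certain set — but in fact each such $\omega$ has either all $k_i$ coordinates on that cycle equal-under-the-map or... — more carefully, the number of coordinates of $\omega$ fixed by $(\alpha,b)$ contributed by cycle $b_i$ is $|\Fix_{j_r}|$-controlled, giving $\sum_{j_1\cdots j_\ell=k}\prod_r \delta_{j_r}(A)$ after dividing by $|A|^n$.

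\medskip

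\textbf{Step 2: Compute $\delta_{j}(A^{[n]}b)$ for the coset.}
The coset $A^{[n]}b$ consists of all $(\alpha,b)$ with $\alpha\in A^{[n]}$. The key observation is that as $\alpha=(\alpha_1,\dots,\alpha_n)$ ranges over $A^{[n]}$, the tuple of $b_i$-products $(b_1(\alpha),\dots,b_\ell(\alpha))$ is \emph{uniformly distributed} over $A^\ell$: indeed for each cycle $b_i$ the smallest-element coordinate $\alpha(y_i)$ can be chosen last and freely to hit any prescribed product value, while the remaining coordinates on that cycle (and coordinates on trivial cycles) range freely, and the cycles are disjoint. Hence, writing $a_r=b_r(\alpha)$, the proportion of $(\alpha,b)\in A^{[n]}b$ with exactly $k$ fixed points equals the proportion of $(a_1,\dots,a_\ell)\in A^\ell$ with $\sum$ over the cycles of their fixed-point counts multiplying to $k$; since each $a_r$ lies in exactly $\delta_j(A)$-fraction of $A$ having $j$ fixed points, we get $\delta_k(A^{[n]}b)=\sum_{j_1\cdots j_\ell=k}\prod_{r=1}^\ell\delta_{j_r}(A)$. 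This proves part~(a); the $k=0$ case follows because $j_1\cdots j_\ell=0$ iff some $j_r=0$, so by inclusion–exclusion (or directly, $1-\delta=\prod(1-\delta(A))$ since ``no fixed point'' means every cycle independently contributes a derangement) $\delta(A^{[n]}b)=1-(1-\delta(A))^\ell$.

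\medskip

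\textbf{Step 3: Average over $B$.}
Part~(b) follows by averaging part~(a) over $b\in B$:
\[
\delta_k(A\power B)=\frac{1}{|B|}\sum_{b\in B}\delta_k(A^{[n]}b),
\]
which is valid since $|A\power B|=|A|^n|B|$ and the cosets $A^{[n]}b$ partition $A\power B$ into $|B|$ equal-sized pieces. The summand depends on $b$ only through its number of cycles $\ell=c_n(b)$, and there are $\stir{B}{\ell}$ elements of $B$ with $\ell$ cycles, so grouping by $\ell$ gives $\delta_k(A\power B)=\frac{1}{|B|}\sum_{\ell=1}^n\stir{B}{\ell}\sum_{j_1\cdots j_\ell=k}\prod_{r=1}^\ell\delta_{j_r}(A)$. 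For $k=0$ substitute $\delta(A^{[n]}b)=1-(1-\delta(A))^\ell$ and use $\sum_{\ell=1}^n\stir{B}{\ell}=|B|$ to get $\delta(A\power B)=1-\frac{1}{|B|}\sum_{\ell=1}^n\stir{B}{\ell}(1-\delta(A))^\ell$, which is the $\mathcal{C}_B$ formula of Theorem~\ref{boston}(b) and recovers the statement in Theorem~\ref{thm_formulas}.

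\medskip

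\textbf{Main obstacle.}
The only genuinely delicate point is Step~1–2: correctly identifying that a fixed $\omega$ on a cycle $b_i$ is governed by $\Fix(b_i(\alpha))$ and, crucially, that the $b_i$-products are jointly uniform over $A^\ell$ when $\alpha$ ranges over $A^{[n]}$ — this uniformity is what lets the combinatorial sum decouple into a product over cycles. One must also be careful with trivial cycles of $b$ (fixed points of $b$): these are $1$-cycles with $b_i(\alpha)=\alpha(y_i)$, so they behave uniformly just like longer cycles, and including them is exactly why the index $\ell$ runs over $c_n(b)$ rather than only the number of nontrivial cycles. Once these bookkeeping issues are handled, the rest is routine multinomial manipulation and the averaging over $B$.
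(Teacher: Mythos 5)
Your proposal is correct and follows essentially the same route as the paper: the cycle-by-cycle analysis showing that the fixed $\omega\in[m]^{[n]}$ of $(\alpha,b)$ are governed by the fixed points of the $b_i$-products (so their number is $j_1\cdots j_\ell$), the observation that $(b_1(\alpha),\dots,b_\ell(\alpha))$ is uniformly distributed on $A^\ell$ as $\alpha$ ranges over $A^{[n]}$ (which the paper phrases as solving for $\alpha(y_i)$ with the remaining coordinates free), and the averaging over $b\in B$ grouped by cycle number via $\stir{B}{\ell}$. The only differences are cosmetic: for $k=0$ you invoke independence of the events ``$b_r(\alpha)$ is a derangement'' directly, whereas the paper deduces $1-(1-\delta(A))^\ell$ from the sum formula by induction on $\ell$ (your version is slightly cleaner), and the garbled aside in Step~1 about ``coordinates of $\omega$ fixed'' should simply be deleted, since the correct statement --- that the number of fixed points of $(\alpha,b)$ in $[m]^{[n]}$ is the product of the $|\Fix(b_i(\alpha),[m])|$ --- already appears in the sentence before it.
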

\begin{proof}
 (a) Suppose each cycle $b_i$ is defined as
 $b_i=(y_i,y_ib,\dots,y_ib^{k_i-1})$
  where $y_i$ is the smallest
  element in the cycle.  Suppose $(\alpha,b)\in G$ fixes some
  $\omega\in\Omega$.  Then $\omega(yb^{-1})\alpha(yb^{-1})=\omega(y)$
  for all $y\in[n]$ by~\Pw.  Letting $y=y_ib^{j+1}$ shows that for
  each $i$ and $j$ we have
  \[
  \omega(y_ib^j)\alpha(y_ib^j)=\omega(y_ib^{j+1});
  \]
  in particular,
  $\omega(y_i)\alpha(y_i)\alpha(y_ib)\cdots\alpha(y_ib^{k_i-1})=\omega(y_i)$,
  and so $\omega(y_i)$ is a fixed point of $b_i(\alpha)$. Moreover,
  the converse is true, and  $\omega$ is a fixed
  point of $(\alpha,b)$ if and only if each $\omega(y_i)$ is a fixed
  point of $b_i(\alpha)$, and $\omega(y_ib^{j+1})$ is defined as
  $\omega(y_i)\alpha(y_i)\alpha(y_ib)\cdots\alpha(y_ib^j)$.  If the
  fixed points of $b_i(\alpha)$ are $x_{i,1},\ldots,x_{i,j_i}$ (so
  $b_i(\alpha)\in {\rm Fix}_{j_i}(A)$), then $(\alpha,b)$ has exactly
  $j_1\cdots j_\ell$ fixed points. Thus, for $b$ as above, the number
  of $\alpha\in A^{[n]}$
  for which $(\alpha,b)$ has exactly $k$ fixed points is
\[
|{\rm Fix}_k(A^{[n]}b)|=\sum_{j_1\cdots j_\ell=k} |\{\alpha \in A^{[n]}
\mid \text{ each }b_i(\alpha)=\alpha(y_i)\alpha(y_ib)\cdots
\alpha(y_ib^{k_i-1})\in {\rm Fix}_{j_i}(A)\} |
\]
For each $i$, we have $b_i(\alpha)\in{\rm Fix}_{j_i}(A)$ if and only
if $\alpha(y_i)=c (\alpha(y_ib)\cdots\alpha(y_ib^{k_i-1}))^{-1}$ where
$c\in {\rm Fix}_{j_i}(A)$ and $\alpha(y_ib),\dots,
\alpha(y_ib^{k_i-1})\in A$ are arbitrary. Thus,
\[ |{\rm Fix}_k(A^{[n]}b)|= \sum_{j_1\cdots j_\ell=k} |A|^{n-\ell}\prod_{r=1}^\ell |{\rm Fix}_{j_r}(A)|,\]
and dividing by $|A^{[n]}b|=|A|^n$ proves the first claim.

The claim
for $k=0$ follows via induction on $\ell$. Indeed, if $\ell=1$,
then the claim is true by the above display. For $\ell>1$, we
split the sum into the cases $j_1=0$ and $j_1\ne 0$, giving
\begin{align*}
  \delta(A^{[n]}b) &= \delta(A)\sum_{j_2,\dots, j_\ell\geq 0}
  \prod_{r=2}^\ell \delta_{j_r}(A)+\sum_{j_1=1}^m\sum_{j_2\cdots j_\ell=0}
  \prod_{r=1}^\ell \delta_{j_r}(A).
\end{align*}
The first summand simplifies as follows
\[
\delta(A)\sum_{j_2,\dots, j_\ell\geq 0}
\prod_{r=2}^\ell \delta_{j_r}(A)=\delta(A)\prod_{r=2}^\ell
\sum_{j_r=0}^m \delta_{j_r}(A)=\delta(A)\prod_{r=2}^\ell 1=\delta(A)
\]
and the second summand simplifies as
\[
\sum_{j_1=1}^m\sum_{j_2\cdots j_\ell=0} \prod_{r=1}^\ell
\delta_{j_r}(A)=\sum_{j_1=1}^n \delta_{j_1}(A) \sum_{j_2\cdots j_\ell=0} \prod_{r=2}^\ell
\delta_{j_r}(A)=(1-\delta(A))\sum_{j_2\cdots j_\ell=0} \prod_{r=2}^\ell
\delta_{j_r}(A)
\]
with $\sum_{j_2\cdots j_\ell=0}\prod_{r=2}^\ell
\delta_{j_r}(A)=1-(1-\delta(A))^{\ell-1}$ by the induction
hypothesis. Together,
\[\delta(A^{[n]}b)=\delta(A) + (1-\delta(A))(1-(1-\delta(A))^{\ell-1})=1-(1-\delta(A))^{\ell},\]
as claimed.

\noindent (b) Observe that both formulas in (a) depend only on $k$,
and the number $\ell$  of cycles in the cycle decomposition of $b\in B$.
Therefore
\begin{align*}
    |{\rm Fix}_k(A\power B)|&=\sum_{\ell=1}^n \stir{B}{\ell} |A|^n \sum_{j_1\cdots j_\ell=k} \prod_{r=1}^\ell \delta_{j_r}(A),
\end{align*}
and dividing by $|A\power B|=|A|^n|B|$ yields the first formula of~(b).
The formula
for $k=0$ also follows from (a) together with the fact that $\sum_{\ell=1}^n\stir{B}{\ell}=|B|$.
\end{proof}

\vspace*{2ex}

\begin{corollary}\label{C:Pw}
  If $A\le\sym_m$ and $B\le\sym_n$, then
  $\delta_1(A\power B)=\mathcal{C}_B(\delta_1(A))$ where
  $\mathcal{C}_B(x)=\tfrac{1}{|B|}\sum_{\ell=1}^n\stir{B}{\ell}x^{\ell}$.
\end{corollary}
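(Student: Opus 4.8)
The plan is to derive Corollary~\ref{C:Pw} directly from Theorem~\ref{thm_power}(b) by specialising $k=1$ and simplifying the inner sum $\sum_{j_1\cdots j_\ell=1}\prod_{r=1}^\ell\delta_{j_r}(A)$. The key observation is that for nonnegative integers $j_1,\dots,j_\ell$, the product $j_1\cdots j_\ell$ equals $1$ if and only if every $j_r=1$. Hence the inner sum collapses to the single term $\prod_{r=1}^\ell\delta_1(A)=\delta_1(A)^\ell$, and Theorem~\ref{thm_power}(b) becomes $\delta_1(A\power B)=\frac{1}{|B|}\sum_{\ell=1}^n\stir{B}{\ell}\delta_1(A)^\ell$, which is exactly $\mathcal{C}_B(\delta_1(A))$ by the definition of $\mathcal{C}_B$ recalled in Section~\ref{sec:cycleindex}.

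First I would state that we apply Theorem~\ref{thm_power}(b) with $k=1$. Second, I would justify the combinatorial fact that $j_1\cdots j_\ell=1$ with all $j_r\in\{0,1,\dots,m\}$ forces $j_1=\dots=j_\ell=1$: indeed a product of nonnegative integers is $1$ precisely when each factor is $1$ (any factor $0$ makes the product $0$, and any factor $\ge2$ makes it $\ge2$). Third, substitute to get the single summand $\delta_1(A)^\ell$ and recognise the resulting expression as $\mathcal{C}_B(\delta_1(A))$.

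There is essentially no obstacle here — the corollary is an immediate specialisation of the already-proved Theorem~\ref{thm_power}(b), and the only thing worth spelling out is the triviality about products of nonnegative integers equalling $1$. The one point to be slightly careful about is that the indices $j_r$ range over $\{0,1,\dots,m\}$ (as in the proof of Theorem~\ref{thm_power}), so one should note that $j_r=1$ is indeed an admissible value and no other choice contributes; this is immediate since $m\ge1$.

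\begin{proof}
  Apply Theorem~\ref{thm_power}(b) with $k=1$. For nonnegative integers $j_1,\dots,j_\ell$ the product $j_1\cdots j_\ell$ equals $1$ if and only if $j_1=\dots=j_\ell=1$: any factor equal to $0$ gives product $0$, and any factor at least $2$ gives product at least $2$. Hence the inner sum over $j_1\cdots j_\ell=1$ has the unique term $\prod_{r=1}^\ell\delta_1(A)=\delta_1(A)^\ell$, and Theorem~\ref{thm_power}(b) gives
  \[
    \delta_1(A\power B)=\frac{1}{|B|}\sum_{\ell=1}^n\stir{B}{\ell}\,\delta_1(A)^\ell=\mathcal{C}_B(\delta_1(A)),
  \]
  where the last equality is the definition of $\mathcal{C}_B$ from Section~\ref{sec:cycleindex}.
\end{proof}
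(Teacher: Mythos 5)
Your proof is correct and matches the paper's (implicit) argument: the corollary is stated as an immediate specialisation of Theorem~\ref{thm_power}(b) at $k=1$, with the inner sum collapsing to $\delta_1(A)^\ell$ exactly as you observe. Nothing further is needed.
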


\subsection{Derangements of \texorpdfstring{$A\power\sym_n$}{}}
If $c_n(g)$ denotes the number of cycles of $g\in\sym_n$, then the formula of Theorem \ref{boston}(b) can be written~as 
\begin{equation*}\label{eq_power2}
    \delta(A\power B)
    = 1-\frac{1}{|B|}\sum_{b\in B} (1-\delta(A))^{c_n(b)}.
\end{equation*}

For $B=C_n=\langle(1,2,\dots,n)\rangle\leq \sym_n$, we obtain the following 
by Example~\ref{example:stir} and Theorem~\ref{boston}(b).
   
\begin{corollary}\label{cor_cyclic}
  Let $A\leq \sym_m$ and let $C_n\leq \sym_n$ be generated by an $n$-cycle. Then
  \[
  \delta(A\power C_n)=1-\frac{1}{n}\sum_{d\mid n} \phi(d) (1-\delta(A))^{n/d},\]
where $\phi$ denotes Euler's $\phi$-function.
\end{corollary}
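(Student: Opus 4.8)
The plan is to invoke the closed formula for the derangement proportion under power action and then specialise to the cyclic group $C_n$. By Theorem~\ref{boston}(b) (equivalently, by the $k=0$ case of Theorem~\ref{thm_power}(b)), for any $B\le\sym_n$ we have
\[
\delta(A\power B)=1-\frac{1}{|B|}\sum_{\ell=1}^n\stir{B}{\ell}\,(1-\delta(A))^{\ell}.
\]
Now take $B=C_n$. Then $|C_n|=n$, and Example~\ref{example:stir} records that $\stir{C_n}{\ell}=\phi(n/\ell)$ when $\ell\mid n$ and $\stir{C_n}{\ell}=0$ otherwise; the point is simply that $(1,2,\dots,n)^k$ decomposes into $n/\gcd(n,k)$ cycles all of length $\gcd(n,k)$, so the number of elements of $C_n$ having exactly $d$ cycles (for $d\mid n$) is the number of elements of order $n/d$, namely $\phi(n/d)$. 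Substituting these values gives
\[
\delta(A\power C_n)=1-\frac{1}{n}\sum_{d\mid n}\phi(n/d)\,(1-\delta(A))^{d}.
\]

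The only remaining step is the change of summation variable $d\mapsto n/d$, which is a bijection of the divisor set of $n$ and rewrites the right-hand sum as $\sum_{d\mid n}\phi(d)(1-\delta(A))^{n/d}$, yielding the stated identity. There is essentially no obstacle: the corollary is a direct substitution into Theorem~\ref{boston}(b) combined with the cycle count of Example~\ref{example:stir}, and the only things worth a sentence in the write-up are that the reindexing of divisors is performed correctly and that no $\ell=0$ term appears (for $n\ge1$ every element of $C_n$ has at least one cycle, so the sum over $\ell$ genuinely starts at $\ell=1$, matching $\sum_{\ell=1}^n\stir{C_n}{\ell}=|C_n|=n$).
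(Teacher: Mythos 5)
Your proof is correct and follows exactly the route the paper intends: the paper derives this corollary by citing Theorem~\ref{boston}(b) together with the cycle counts $\stir{C_n}{d}=\phi(n/d)$ from Example~\ref{example:stir}, which is precisely your substitution. The divisor reindexing $d\mapsto n/d$ is carried out correctly, so nothing is missing.
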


\begin{corollary}\label{C:AwrSn}
  If $m,n\ge2$ and $A\le \sym_m$, then
  \[
  \delta(A\power \sym_n)=1-\prod_{\ell=1}^n\left(1-\frac{\delta(A)}{\ell}\right)
  \quad\text{and hence $\lim_{n\to\infty}\delta(A\power \sym_n)=1$.}
  \]
  We have
  $1-\frac{1}{n^{\delta(A)}}\le\delta(A\power \sym_n)$, and if
  $A\le\sym_m$ is 2-transitive, then $\delta(A\power \sym_n)\leq 1-\frac{1}{n+1}$.
\end{corollary}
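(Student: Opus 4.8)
\textbf{Proof plan for Corollary~\ref{C:AwrSn}.}
The plan is to specialise Theorem~\ref{boston}(b) to $B=\sym_n$ and then manipulate the resulting polynomial identity. First I would write $\delta(A\power\sym_n)=1-\mathcal{C}_{\sym_n}(1-\delta(A))$, where $\mathcal{C}_{\sym_n}(x)=\frac{1}{n!}\sum_{\ell=1}^n\stir{n}{\ell}x^\ell$. The key observation is the classical generating-function identity $\sum_{\ell=0}^n\stir{n}{\ell}x^\ell=x(x+1)\cdots(x+n-1)$ for the (unsigned) Stirling numbers of the first kind (see \cite[Section~6.1]{GKP89}), which gives $\mathcal{C}_{\sym_n}(x)=\frac{1}{n!}\prod_{\ell=0}^{n-1}(x+\ell)=\prod_{\ell=1}^n\frac{x+\ell-1}{\ell}$. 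Substituting $x=1-\delta(A)$ yields $\mathcal{C}_{\sym_n}(1-\delta(A))=\prod_{\ell=1}^n\frac{\ell-\delta(A)}{\ell}=\prod_{\ell=1}^n\left(1-\frac{\delta(A)}{\ell}\right)$, which is exactly the claimed formula. The limit statement then follows because $\prod_{\ell=1}^\infty\left(1-\frac{\delta(A)}{\ell}\right)$ diverges to $0$ whenever $\delta(A)>0$ (its logarithm is $\sum_\ell\log(1-\delta(A)/\ell)$, and the terms are asymptotic to $-\delta(A)/\ell$, so the sum is $-\infty$); and $\delta(A)>0$ holds since $A$ is transitive and $m\ge2$, so $A$ has a derangement by the Orbit-Counting Theorem (cf.\ the opening of Section~\ref{sec_background}).

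For the lower bound $1-n^{-\delta(A)}\le\delta(A\power\sym_n)$, I would show equivalently that $\prod_{\ell=1}^n\left(1-\frac{\delta(A)}{\ell}\right)\le n^{-\delta(A)}$. Writing $d=\delta(A)\in(0,1]$ and taking logarithms, this is $\sum_{\ell=1}^n\log\left(1-\frac{d}{\ell}\right)\le -d\log n$. Using $\log(1-t)\le -t$ for $t\in[0,1)$ gives $\sum_{\ell=1}^n\log\left(1-\frac{d}{\ell}\right)\le -d\sum_{\ell=1}^n\frac{1}{\ell}\le -d\log n$, since $\sum_{\ell=1}^n\frac1\ell\ge\int_1^{n+1}\frac{dt}{t}=\log(n+1)\ge\log n$; alternatively one can avoid the case $\ell=1$ (where $1-d$ could be $0$) by the cleaner route of comparing $\prod_{\ell=2}^n\left(1-\frac{d}{\ell}\right)$ with $\prod_{\ell=2}^n\left(\frac{\ell-1}{\ell}\right)^{d}=n^{-d}$ via the inequality $1-\frac{d}{\ell}\le\left(\frac{\ell-1}{\ell}\right)^{d}=\left(1-\frac1\ell\right)^{d}$, which is the concavity (Bernoulli-type) inequality $1-dt\le(1-t)^d$ valid for $t\in[0,1]$, $d\in[0,1]$; and then noting $1-\frac{d}{1}=1-d\le 1$ handles the $\ell=1$ factor.

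For the upper bound when $A$ is $2$-transitive, I would use that $2$-transitivity of $A\le\sym_m$ forces $\delta(A)\ge\frac12$: indeed a $2$-transitive group has rank $2$, so \eqref{eq_DFG} gives $\delta(A)\ge\frac{r-1}{m}$ — this is too weak — so instead I would use the rank-$2$ upper-bound direction together with a direct count, or better, recall the Cameron--Cohen lower bound in the sharper form for $2$-transitive groups; the cleanest path is $\delta(A)\ge 1-\frac{1}{2}\cdot\frac{?}{}$... Actually the required fact is simply $\delta(A)\ge\frac{1}{2}$ for $A$ $2$-transitive, which holds because in a $2$-transitive group the number of fixed points of a uniformly random element has mean $1$ and is not identically $1$, forcing a positive proportion of derangements, and more precisely Cameron--Cohen \cite{CC92} give $\delta(A)\ge\frac{r-1}{r}=\frac12$ when combined with transitivity of the point stabiliser's action refinements; I would cite the relevant statement. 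Granting $\delta(A)\ge\frac12$, monotonicity of $d\mapsto\prod_{\ell=1}^n\left(1-\frac{d}{\ell}\right)$ (decreasing in $d$) gives $\delta(A\power\sym_n)=1-\prod_{\ell=1}^n\left(1-\frac{d}{\ell}\right)\le 1-\prod_{\ell=1}^n\left(1-\frac{1}{2\ell}\right)$, so it suffices to show $\prod_{\ell=1}^n\left(1-\frac{1}{2\ell}\right)\ge\frac{1}{n+1}$. Since $1-\frac{1}{2\ell}=\frac{2\ell-1}{2\ell}\ge\sqrt{\frac{\ell}{\ell+1}}$ (equivalently $(2\ell-1)^2(\ell+1)\ge(2\ell)^2\ell$, i.e.\ $\ell\ge1$... checking: LHS$-$RHS $=4\ell^3+4\ell^2+\ell-4\ell^2-1-4\ell^3=\ell-1\ge0$), we get $\prod_{\ell=1}^n\left(1-\frac{1}{2\ell}\right)\ge\prod_{\ell=1}^n\sqrt{\frac{\ell}{\ell+1}}=\frac{1}{\sqrt{n+1}}\ge\frac{1}{n+1}$.

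\textbf{Main obstacle.} The only genuinely non-mechanical point is pinning down $\delta(A)\ge\frac12$ for $2$-transitive $A$ with the correct citation — the rest is the Stirling generating function plus elementary inequalities $\log(1-t)\le -t$, Bernoulli, and $\frac{2\ell-1}{2\ell}\ge\sqrt{\ell/(\ell+1)}$. One must also be slightly careful with the $\ell=1$ factor $1-\delta(A)$, which vanishes precisely when $A$ is sharply transitive of degree giving $\delta(A)=1$ — in that degenerate case $\delta(A\power\sym_n)=1$ and all inequalities hold trivially, so it is harmless, but it should be mentioned.
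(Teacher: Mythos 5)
Your derivation of the product formula via the Stirling generating function $\sum_{\ell}\stir{n}{\ell}x^\ell=\prod_{\ell=1}^n(x+\ell-1)$ and your proof of the lower bound $1-n^{-\delta(A)}\le\delta(A\power\sym_n)$ match the paper's proof in substance: the paper cites \cite[Eq.~(2.2.2)]{Little} for $\prod_{\ell=1}^n(1-\delta(A)/\ell)\le e^{-\delta(A)H_n}$ where you use $\log(1-t)\le -t$ by hand, and both then use $H_n\ge\log n$. One caveat you share with the paper: the limit $\lim_{n\to\infty}\delta(A\power\sym_n)=1$ needs $\delta(A)>0$, and your parenthetical appeal to transitivity of $A$ imports a hypothesis that is not in the statement of the corollary.

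The $2$-transitive upper bound is where your argument genuinely breaks. The paper applies \eqref{eq_DFG} to the wreath product itself: it shows $G=A\power\sym_n$ has rank $n+1$ (by counting the orbits of the stabiliser of $(m,\dots,m)$), whence $\delta(G)\le 1-\frac{1}{n+1}$ directly. You instead try to bound $\delta(A)$ and estimate the product --- a legitimate alternative route --- but the fact you need is $\delta(A)\le\frac12$, not $\delta(A)\ge\frac12$. Since $\prod_{\ell=1}^n(1-d/\ell)$ is decreasing in $d$, an upper bound on $1-\prod_{\ell=1}^n(1-d/\ell)$ requires an \emph{upper} bound on $d$; moreover $\delta(A)\ge\frac12$ is simply false for $2$-transitive groups ($\delta(\sym_m)\to e^{-1}<\frac12$), and the Cameron--Cohen inequality \cite{CC92} is a lower bound of the form $\frac{r-1}{m}$, not $\frac{r-1}{r}$. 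The correct ingredient, $\delta(A)\le 1-\frac1r=\frac12$ for the rank-$2$ group $A$, is exactly the Diaconis--Fulman--Guralnick half of \eqref{eq_DFG}, which you started to invoke and then abandoned. Even granting that fix, your auxiliary inequality $1-\frac{1}{2\ell}\ge\sqrt{\ell/(\ell+1)}$ is also false (at $\ell=1$ it reads $\frac12\ge\frac{1}{\sqrt2}$; the correct expansion gives $(2\ell-1)^2(\ell+1)-4\ell^3=1-3\ell<0$). Replace it by $\frac{2\ell-1}{2\ell}\ge\sqrt{\frac{\ell-1}{\ell}}$ for $\ell\ge2$ (equivalent to $4\ell^2-4\ell+1\ge4\ell^2-4\ell$), which telescopes to $\prod_{\ell=1}^n\bigl(1-\frac{1}{2\ell}\bigr)\ge\frac{1}{2\sqrt{n}}\ge\frac{1}{n+1}$. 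With these two repairs your route becomes a valid and arguably more elementary proof of the last claim than the paper's rank computation, but as written it rests on two false inequalities and an inverted monotonicity step.
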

\begin{proof}
  Let $A\power \sym_n$ act on $[m]^{[n]}$ via \Pw. The identity
  $\sum_{\ell=1}^n\stir{n}{\ell}x^\ell=\prod_{\ell=1}^n(x+\ell-1)$,
  see \cite[Table~264]{GKP89}, and Theorem~\ref{boston}(b) yield
  \begin{align*}
    \delta(A\power \sym_n)
    &= 1-\frac{1}{|\sym_n|}\sum_{\ell=1}^n \stir{\sym_n}{\ell}(1-\delta(A))^\ell
    = 1-\frac{1}{n!}\sum_{\ell=1}^n \stir{n}{\ell}(1-\delta(A))^\ell\\
    &=1-\prod_{\ell=1}^{n}\frac{(1-\delta(A))+\ell-1}{\ell}
    =1-\prod_{\ell=1}^n \left(1-\frac{\delta(A)}{\ell}\right).
  \end{align*}
  This proves the first claim. Moreover,
  $\lim_{n\to\infty}\delta(A\power \sym_n)=1$ follows from
  $\prod_{\ell=1}^\infty \left(1-\frac{\delta(A)}{\ell}\right)=0$, which  is
  a consequence of $\sum_{\ell=1}^\infty\delta(A)/\ell=\infty$, see~\cite[Theorem~2.2.2]{Little}.

  We now bound $\delta(A\power\sym_n)$.
  It follows from \cite[Eq.\ (2.2.2)]{Little} that 
  $1-\prod_{\ell=1}^n(1-\frac{\delta(A)}{\ell})\ge 1-e^{-\delta(A)H_n}$ where $H_n=\sum_{\ell=1}^n\frac{1}{\ell}$.  Now $H_n>\log(n)$ yields $1-\prod_{\ell=1}^n(1-\frac{\delta(A)}{\ell})\ge 1-e^{-\delta(A)\log(n)}
  = 1-n^{-\delta(A)}$, as desired. For the last claim, let  $A\le\sym_m$ be 2-transitive. By \eqref{eq_DFG}, it suffices to show that  $G=A\power \sym_n$ has rank $r=n+1$.
 {We now prove this fact.} Let $C$ be the stabiliser in $A$ of
  the point $m$. Then $|A:C|=m$ and hence the stabiliser of the point
  $[m,m,\dots,m]\in [m]^{[n]}$ is $D=C\power \sym_n$.
  Since $|G:D|=m^n$ we see that $G$ is transitive on $[m]^{[n]}$.
  Consider the orbits of $D$ on  $[m]^{[n]}$. If 
  $\omega=(\omega_1,\dots,\omega_n)\in [m]^{[n]}$, then by 2-transitivity we may choose
  $\gamma\in C^{[n]}$ such that  $\omega_i\gamma_i=m$ if $\omega_i=m$, and $\omega_i\gamma_i=1$ otherwise, for every $i\in[n]$. If $\omega_i=m$ for exactly $j$ indices $i\in[n]$, then we may choose $(\gamma,b)\in C\power \sym_n$ such that
  $\omega(\gamma,b)=(1,\dots,1,m,\dots,m)$ with exactly
  $j$ copies of $m$ and $n-j$ copies of $1$. Thus, there are $n+1$ orbits in $[m]^{[n]}$ under the action of $D$, one for each $j\in\{0,1,\dots,n\}$, and therefore $G$ has rank $n+1$, as claimed. 
\end{proof}

\begin{remark}
The proof of Corollary~\ref{C:AwrSn} can be modified to prove that
$\lim_{n\to\infty}\delta(A\power \alt_n)=1$ for all $A\le\sym_m$
with $m\ge2$ by using $\stir{n}{\ell}=\stir{\sym_n}{\ell}\ge\stir{\alt_n}{\ell}$ in the
above display.
\end{remark}

\subsection{Density in \texorpdfstring{$[\delta(A),1]$}{}:
  proof of Theorem~\ref{thm_power_dense1}}
  
We now prove Theorem~\ref{thm_power_dense1}.

\begin{proof}[Proof of Theorem \ref{thm_power_dense1}]
  Suppose first that $B\leq \sym_n$ is fixed. Theorem~\ref{boston}(b) yields
  $\delta(A\power B)=f(\delta(A))$ where 
  $f(x)=1-\mathcal{C}_B(1-x)$. Since $\mathcal{C}_B(x)$ is continuous and
  increasing on $[0,1]$, the same is true for $f(x)$. The set
  $\{\delta(A)\mid \textup{$A$ is primitive}\}$ is dense in $[0,1]$ by
  Theorem~\ref{boston}(c), so the same is true for the set
  $\{f(\delta(A))\mid \textup{$A$ is primitive}\}$.
  This proves the first claim as $f(0)=0$ and $f(1)=1$.

  Suppose next that $A\leq \sym_m$ is fixed.
  For each $B\leq \sym_n$ we have 
    $\mathcal{C}_B(x)\le\frac{1}{|B|}\sum_{\ell=1}^n \stir{B}{\ell}x=x$
    for $x\in[0,1]$.
    It follows from $0\le \mathcal{C}_B(x)\le x\le 1$ that
    $0\le x\le 1-\mathcal{C}_B(1-x)\le1$ and hence by Theorem~\ref{boston}(b)
    that $\delta(A\power B)=1-\mathcal{C}_B(1-\delta(A))\ge\delta(A)$.

    Let $p_1<p_2<\cdots$ be a sequence of primes. Set $n=p_1\cdots p_r$
  and let $Z_r$ be the cyclic and regular subgroup $Z_r \leq \sym_n$ generated by an $n$-cycle.   Let $z=1-\delta(A)$. When $r=1$, and $n=p_1$ we have
  \[
    \frac{1}{n}\sum_{d\mid n}\phi(d)z^{n/d}=\left(1-\frac{1}{p_1}\right)z+\frac{1}{p_1}z^{p_1},
  \]
  by Example~\ref{example:stir}. Since $0<z\le1$,
  Corollary~\ref{cor_cyclic} shows that 
  \[
  \lim_{p_1\to \infty}\delta(A\power Z_1)
  = \lim_{p_1\to \infty} \left(1-\left(1-\frac{1}{p_1}\right)z-\frac{1}{p_1}z^{p_1}\right)
  =1-z=\delta(A).
  \]

  Let $q=p_{r+1}$ be a prime with $p_r<q$. We consider $Z_r\le\sym_n$ and
  $Z_{r+1}\le\sym_{nq}$ where $n=p_1\cdots p_r$ and calculate the difference
  $|\delta(A\power Z_{r+1})-\delta(A\power  Z_r)|$.
  Since $\phi(nq)=(q-1)\phi(n)$,
  Corollary~\ref{cor_cyclic} shows the
  following 
  \begin{align*}
    \delta(A\power Z_{r+1})
    &= 1-\frac{1}{nq}\left(\sum_{d\mid n} \phi(d) z^{nq/d}+\sum_{d\mid n} \phi(dq) z^{nq/dq}\right)\\
    &= 1-\frac{1}{nq}\left(\sum_{d\mid n} \phi(d) z^{n/d}\left((z^{n/d})^{q-1}+(q-1)\right)\right) = \delta(A\power Z_r) + D(n,q),
  \end{align*}
  where
  \[ 
  D(n,q)
  =\frac{1}{nq}\left(\sum_{d\mid n} \phi(d) z^{n/d}
  \left(1-(z^{n/d})^{q-1}\right)\right).
  \]
  Since $0<1-(z^{n/d})^{q-1}<1$ we have
  $0<D(n,q)< \frac{1}{q}\delta(A\power Z_r)$, and so $D(n,q)\to 0$ as
  $q\to \infty$. In conclusion,  the sequence
  $(\delta(A\power Z_r))_{r\ge1}$ is strictly increasing and can be arranged to
  start arbitrarily close to $\delta(A)$ with arbitrarily small step
  size $D(n,q)$.

  It remains to show that this sequence converges to $1$. We focus
  on $Z_r$ and for $\Delta\subseteq [r]$ define $p(\Delta)=\prod_{i\in\Delta}p_i$
  and $\Delta'=[r]\setminus \Delta$. Since $p([r])=p(\Delta)p(\Delta')=n$, we have
  \begin{align*}
    \delta(A\power Z_r)&= 1-\frac{1}{n} \sum_{d\mid n}\phi(d)z^{n/d} =1-\frac{1}{p([r])}\sum_{\Delta\subseteq [r]} \phi(p({\Delta})) z^{p({\Delta'})}\\&=1-\frac{\phi(p([r]))}{p([r])} z - \sum_{\Delta\subseteq [r]\atop \Delta\ne [r]} \frac{\phi(p({\Delta}))}{p({\Delta})} \frac{z^{p({\Delta'})}}{p({\Delta'})}\\
    &\geq 1-\prod_{i=1}^r\left(1-\frac{1}{p_i}\right)z - \sum_{\Delta\subseteq [r]\atop \Delta\ne [r]} \frac{z^{p({\Delta'})}}{p({\Delta'})}
    > 1-\prod_{i=1}^r\left(1-\frac{1}{p_i}\right)z - \sum_{i\geq p_1} \frac{z^i}{i}.
  \end{align*}
  The Taylor series $\sum_{i=1}^\infty z^i/i $ converges to
  $-\log(1-z)$ since $|z|<1$. Thus, for any
  given $\varepsilon>0$ we can choose $p_1$ large enough so that
  $\sum_{i\geq p_1} z^i/i<\varepsilon/2$. Moreover,
  $\prod_{p\text{ prime}} (1-\frac{1}{p})$ diverges to~$0$
  by~\cite[Theorem 2.2.2]{Little} since $\sum_{p\text{ prime}}1/p$ diverges.
  Thus, we can choose $r$ large enough so that
  $\prod_{i=1}^r\left(1-\frac{1}{p_i}\right)<\varepsilon/2z$. The claim now
  follows from
  \[
  \delta(A\power Z_r)
  >1-\frac{\varepsilon}{2z}z-\frac{\varepsilon}{2}=1-\varepsilon.
  \qedhere\]
 \end{proof}

\subsection{Proof of Theorem \ref{thm_power_new1}}

{

\begin{proof}[Proof of Theorem \ref{thm_power_new1}]
  Let $T$ be a transversal for $C_m$ in $A_m$, so that $\mathcal{T}=T^{[n]}$
  is a transversal for $C_m^{[n]}$ in $A_m^{[n]}$. Every element in
  $G_m$ can be written as $(\beta\tau,b)$ where $\beta\in C_m^{[n]}$,
  $\tau\in \mathcal{T}$, and $b\in B$. We fix $(\tau,b)$ and determine the
  proportion $\delta(C_m^{[n]}(\tau,b))$ in the coset $C_m^{[n]}(\tau,b)$. Write $\tau=(\tau_1,\dots,\tau_n)$
  and let $b=b_1\cdots b_\ell$ be the disjoint cycle decomposition of $b$.  Let
  \[
    D_\tau(b)=D_\tau(b_1)\cup\cdots\cup D_\tau(b_\ell)
    \quad\textup{where}\quad
    D_\tau(b_i)=\{ \alpha\in C_m^{[n]} \mid
    b_i(\alpha\tau)\in \Der(\sym_m)\}
  \]
  with $b_i(\alpha\tau)$ as in Definition \ref{def:biproduct}.
  Arguing as in the proof of Theorem~\ref{thm_power}(a), it follows that the 
  number of $\alpha\in C_m^{[n]}$ for which $(\alpha\tau,b)$ is a derangement in $C_m^{[n]}(\tau,b)$
  is $|D_\tau(b)|$. In the next paragraph we  now show that if $b$ has exactly $\ell$ cycles, then 
  \[\lim_{m\to\infty}\delta(C_m^{[n]}(\tau,b))=1-(1-\delta_0)^\ell,\]independent of $\tau$.
 
  Before computing this number via inclusion-exclusion,
  we focus on one of the cycles $b_i$ and count $|D_\tau(b_i)|$. To simplify
  notation, we conjugate by an element of $\sym_m\power\sym_n$ (which
  preserves derangements and non-derangements),
  so we can assume that $b_i=(1,2,\dots,k_i)$. The $b_i$-product
  of $\alpha\tau$ with $\alpha\in C_m^{[n]}$ now is
\[
b_i(\alpha\tau)=\alpha_1\tau_1\alpha_2\tau_2\cdots\alpha_{k_i}\tau_{k_i}=\alpha_1
\left(\alpha_2^{\tau_1^{-1}}\alpha_3^{(\tau_1\tau_2)^{-1}}\cdots
\alpha_{k_i}^{(\tau_1\cdots\tau_{k_i})^{-1}}\right)\tau_1\tau_2\cdots
\tau_{k_i},\] which is a derangement in $C_m\tau_1\cdots\tau_{k_i}$ if
and only if $\alpha_1=x
\left(\alpha_2^{\tau_1^{-1}}\alpha_3^{(\tau_1\tau_2)^{-1}}\cdots
\alpha_{k_i}^{(\tau_1\cdots\tau_{k_i})^{-1}}\right)^{-1}$ with $x\in
C_m$ such that $x\tau_1\cdots\tau_{k_i}\in\Der(C_m\tau_1\cdots
\tau_{k_i})$ and $\alpha_j\in C_m$ for each $j\in [n]\setminus \{1\}$.
Since $b_i(\tau)=\tau_1\cdots \tau_{k_i}$, we have
$|\Der_\tau(b_i)|=|\Der(C_m^{[n]}(\tau,b_i))|=|\Der(C_mb_i(\tau))||C_m|^{n-1}$.
Hence
\[
\frac{|\Der_\tau(b_i)|}{|C_m|^{n}}=\frac{|\Der(C_mb_i(\tau))|}{|C_m|}\to\delta_0
\quad\textup{as $m\to\infty$.}
\]
Since the cycles $b_1,\ldots,b_\ell$ have disjoint support, the previous argument shows that for distinct  elements $i_1,\dots,i_j\in [\ell]$, we have
\[
|D_\tau(b_{i_1})\cap\dots\cap D_\tau(b_{i_j})|
={\left(\prod_{k=1}^j|\Der(C_mb_{i_k}(\tau))|\right)|C_m|^{n-j}}
={\left(\prod_{k=1}^j\frac{|\Der(C_mb_{i_k}(\tau))|}{|C_m|}\right)|C_m|^{n},}
\]
and hence
\[
\frac{|D_\tau(b_{i_1})\cap\dots\cap D_\tau(b_{i_j})|}{|C_m|^{n}}
=\left(\prod_{k=1}^j\frac{|\Der(C_mb_{i_k}(\tau))|}{|C_m|}\right)\to\delta_0^j
\quad\textup{as $m\to\infty$.}
\]
Inclusion-exclusion now shows that
 \begin{align}\label{eq_old}
   \frac{|\Der(C_m^{[n]}(\tau,b))|}{|C_m|^{n}}
   &=\frac{1}{|C_m|^{n}}\left|\bigcup_{i=1}^\ell  D_\tau(b_i)\right|
   =\sum_{j=1}^\ell(-1)^{j-1}\sum_{1\le  i_1<\dots<i_j\le\ell}
     \frac{|D_\tau(b_{i_1})\cap\dots\cap D_\tau(b_{i_j})|}{|C_m|^n}.
 \end{align}
 Taking the limit as $m\to\infty$ gives
 \begin{align}\label{eq_bt}
   \lim_{m\to\infty}\frac{|\Der(C_m^{[n]}(\tau,b))|}{|C_m|^{n}}
   &=\sum_{j=1}^\ell(-1)^{j-1}\sum_{1\le  i_1<\dots<i_j\le\ell}\delta_0^j
     =\sum_{j=1}^\ell(-1)^{j-1}\binom{\ell}{j}\delta_0^j
     =1-(1-\delta_0)^\ell.
 \end{align}
 In summary, if $b$ has exactly $\ell$ cycles, then 
 $\lim_{m\to\infty}\delta(C_m^{[n]}(\tau,b))=1-(1-\delta_0)^\ell$, independent of $\tau$ and the precise structure of the $\ell$ cycles of $b$.

  By assumption, $\pi_n(G_m)=B\leq \sym_n$ for each $m$. The
  kernel of the restriction of $\pi_n$ to $G_m$ is the normal subgroup
  $U_m=G_m\cap \sym_m^{[n]}=G_m\cap A_m^{[n]}$. In particular, $G_m/U_m\cong B$ by the Isomorphism Theorem. By assumption, $C_m^{[n]}\lhdeq U_m$. It follows that for every $b\in B$ with exactly $\ell$ cycles, there exist $|U_m|$ elements in $G_m$ of the form $(\beta,b)$ with $\beta\in A_m^{[n]}$. Moreover, there exist $|U_m|/|C_m|^n$ coset representatives of $C_m^{[n]}$ in $G_m$ of this form, that is, there are  $|U_m/C_m^{[n]}|$
  choices for $\tau$. 
  Recall that $B$ has exactly $\stir{B}{\ell}$ elements with precisely $\ell$ cycles, and let $b(\ell)$ be a fixed element with this property {(and let $b(\ell)$ be arbitrary if $\stir{B}{\ell}=0$)}. By Equation \eqref{eq_bt}, the value of the limit $\lim_{m\to\infty} |\Der(C_m^{[n]}(1,b(\ell)))|/|C_m|^{n}$ is independent of the precise structure of $b(\ell)$ and the choice of $\tau=1$. Thus, using the
  equations $|G_m|=|B||U_m|$,  $\frac{1}{|B|} \sum_{\ell=1}^n \stir{B}{\ell}=1$, 
  and $\mathcal{C}_B(x)=\frac{1}{|B|}\sum_{\ell=1}^n\stir{B}{\ell}x^\ell$
  we obtain 
  \begin{align*}
    \lim_{m\to \infty} \delta(G_m) &=\lim_{m\to\infty}\frac{|\Der(G_m)|}{|G_m|}\\
    & = \lim_{m\to\infty}\frac{1}{|G_m|}\sum_{\ell=1}^n \stir{B}{\ell}\frac{|U_m|}{|C_m|^n}|\Der(C_m^{[n]}(1,b(\ell)))|\\
    &= \frac{1}{|G_m|}\sum_{\ell=1}^n \stir{B}{\ell}|U_m| (1-(1-\delta_0)^\ell) \\
    &=1-\mathcal{C}_B(1-\delta_0).\qedhere
  \end{align*} 
\end{proof}

\vspace*{2ex}

\begin{corollary}\label{C:large}
  Fix $n\ge2$ and let $G_1, G_2, \dots$ be a sequence of subgroups
  where each $G_m$ satisfies
  $\alt_m^{[n]}\lhdeq G_m\le\sym_m\power\sym_n$ and $\pi_n(G_m)=B\le\sym_n$
  is independent of~$m$. Then\[
   \lim_{m\to\infty}\delta(G_m)
   =1-\frac{1}{|B|}\sum_{\ell=1}^n\stir{B}{\ell}(1-e^{-1})^\ell> e^{-1}.
  \]
\end{corollary}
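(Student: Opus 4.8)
The plan is to apply Theorem~\ref{thm_power_new1} with $A=\sym_m$ and $C=\alt_m$. Since $\alt_m\lhdeq\sym_m$, since $\alt_m^{[n]}\lhdeq G_m$ by hypothesis, and since $G_m\le\sym_m\power\sym_n$ together with $\pi_n(G_m)=B$ forces $G_m\le\sym_m^{[n]}\rtimes B=\sym_m\power B$, all hypotheses of Theorem~\ref{thm_power_new1} are met. The set of right cosets of $\alt_m$ in $\sym_m$ has only two members, $\alt_m$ and $\sym_m\setminus\alt_m$, whose derangement proportions are $d_m+e_m$ and $d_m-e_m$ by Corollary~\ref{OLDAnSn}(b),(c). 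Hence one may take $\delta_L=\min\{d_m-e_m,d_m+e_m\}$ and $\delta_U=\max\{d_m-e_m,d_m+e_m\}$ in Theorem~\ref{thm_power_new1}, and both lie in $[0,1]$ as they are proportions. This yields
\[
1-\mathcal{C}_B(1-\delta_L)\le\delta(G_m)\le1-\mathcal{C}_B(1-\delta_U).
\]

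Next I would let $m\to\infty$. Since $\lim_{m\to\infty}d_m=e^{-1}$ and $\lim_{m\to\infty}e_m=0$, both $d_m-e_m$ and $d_m+e_m$ tend to $e^{-1}$, so $\delta_L,\delta_U\to e^{-1}$. As $\mathcal{C}_B$ is a polynomial, hence continuous on $[0,1]$, both outer bounds converge to $1-\mathcal{C}_B(1-e^{-1})$, and the squeeze theorem gives
\[
\lim_{m\to\infty}\delta(G_m)=1-\mathcal{C}_B(1-e^{-1})=1-\frac{1}{|B|}\sum_{\ell=1}^n\stir{B}{\ell}(1-e^{-1})^\ell,
\]
which is the claimed limit.

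For the strict inequality $>e^{-1}$ it suffices to prove $\mathcal{C}_B(x_0)<x_0$ for $x_0=1-e^{-1}\in(0,1)$. Indeed $x_0^\ell\le x_0$ for every $\ell\ge1$, and since $n\ge2$ the identity of $B$ has $n$ cycles, so $\stir{B}{n}\ge1$ and the $\ell=n$ term is strict: $x_0^n<x_0$. Using $\sum_{\ell=1}^n\stir{B}{\ell}=|B|$ we obtain
\[
\mathcal{C}_B(x_0)=\frac{1}{|B|}\sum_{\ell=1}^n\stir{B}{\ell}x_0^\ell<\frac{1}{|B|}\sum_{\ell=1}^n\stir{B}{\ell}x_0=x_0,
\]
whence $\lim_{m\to\infty}\delta(G_m)=1-\mathcal{C}_B(x_0)>1-x_0=e^{-1}$.

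The argument is short, and I expect no serious obstacle; the points that need a little care are verifying that the hypotheses of Theorem~\ref{thm_power_new1} transfer to the present (possibly non-wreath) setting with the choice $C=\alt_m$, and pinning down the uniform bounds $\delta_L\le\delta(\alt_m a)\le\delta_U$ over the two cosets of $\alt_m$. The limit computation and the final strict inequality are then routine.
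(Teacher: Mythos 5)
Your proposal is correct and follows essentially the same route as the paper: apply Theorem~\ref{thm_power_new1} with $A=\sym_m$, $C=\alt_m$, use $\delta(\alt_m),\delta(\sym_m\setminus\alt_m)\to e^{-1}$ from Corollary~\ref{OLDAnSn} to squeeze the limit, and then bound $\mathcal{C}_B(1-e^{-1})<1-e^{-1}$ for the strict inequality. Your write-up is in fact slightly more careful than the paper's, e.g.\ in justifying strictness via the $\ell=n$ term coming from the identity of $B$.
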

  
\begin{proof}
  The first equality follows from Theorem \ref{thm_power_new1} by
  choosing $C_m=\alt_m$ and $A_m=\sym_m$, and the observation that
  $\lim_{m\to\infty}\delta(\alt_m)=\lim_{m\to\infty}\delta(\sym_m\setminus\alt_m)=e^{-1}$
  by Corollary~\ref{OLDAnSn}(b,c), so we set $\delta_0=e^{-1}$ in Theorem \ref{thm_power_new1}. The inequality follows from
  $\sum_{\ell=1}^n\stir{B}{\ell}(1-e^{-1})^\ell< \sum_{\ell=1}^n\stir{B}{\ell}(1-e^{-1})=(1-e^{-1})|B|$.
\end{proof}
}

A modification of the proof of Theorem \ref{thm_power_new1} also shows the following.
\begin{corollary}
 Suppose $A\leq \sym_m$ and $B\leq \sym_n$ and let $C\lhdeq A$.  Let $\delta_L,\delta_U\in[0,1]$ such $\delta_L\le\delta(Ca)\le\delta_U$
 for all $a\in A$.  Suppose $G\leq \sym_m\power \sym_n$ satisfies $C^{[n]}\lhdeq G \leq A\power B$ and $B=\pi_n(G)$ is the image of the natural projection $\pi_n\colon \sym_m\power\sym_n\to\sym_n$. We have
 \[ 1-x-y \leq \delta(G)\leq 1-x+y\]
 where $x=\tfrac{1}{2}\big(\mathcal{C}_B(1-\delta_U)+\mathcal{C}_B(1-\delta_L)\big)$ and  $y=\tfrac{1}{2}\big(\mathcal{C}_B(1+\delta_U) -\mathcal{C}_B(1+\delta_L)\big)$.

\end{corollary}
  
\begin{proof}
  Starting as in the proof of Theorem \ref{thm_power_new1}, this time we estimate 
  \[|C|^{n}\delta_L^j\le |D_\tau(b_{i_1})\cap\dots\cap D_\tau(b_{i_j})|\le |C|^{n}\delta_U^j.\]
  If $b\in B$ has $\ell$ cycles, this yields the following lower bound in  Equation \eqref{eq_old}:
\begin{align*}
   \frac{|\Der(C^{[n]}(\tau,b))|}{|C|^{n}}
   & 
   =\sum_{j=1}^\ell(-1)^{j-1}\sum_{1\le  i_1<\dots<i_j\le\ell}
   \frac{|D_\tau(b_{i_1})\cap\dots\cap D_\tau(b_{i_j})|}{|C|^n}\\
   &\geq  1 - \sum_{j\geq 0\text{ even}} {\ell \choose j} \delta_U^j + \sum_{j\geq 0\text{ odd}} {\ell \choose j} \delta_L^j\\
   &= 1 - \frac{(1+\delta_U)^\ell + (1-\delta_U)^\ell}{2} + \frac{(1+\delta_L)^\ell-(1-\delta_L)^\ell}{2}.
 \end{align*}
Using this in the last equations of the proof of Theorem \ref{thm_power_new1} yields
\[\delta(G)\geq 1- \frac{1}{2}\big(\mathcal{C}_B(1-\delta_U)+\mathcal{C}_B(1-\delta_L)+\mathcal{C}_B(1+\delta_U) -\mathcal{C}_B(1+\delta_L)\big).\] 
The upper bound follows analogously by swapping the subscripts $U$ and $L$.  
\end{proof}

\section*{Acknowledgements}
We would like to thank Peter Cameron and the two anonymous referees for
suggesting some changes and the inclusion of certain references.

\enlargethispage{1cm}
{\small

}

\vspace*{0.5cm}

\end{document}